\documentclass[reqno,oneside,12pt]{amsart}
\usepackage{amssymb,enumerate,bm}
\usepackage[T1]{fontenc}
\usepackage[width=17cm,centering]{geometry}
\usepackage[colorlinks=true, allcolors=blue]{hyperref}
\usepackage{tcolorbox, stmaryrd, mathrsfs}
\usepackage{color}
\usepackage{tikz}
\usepackage{parskip}
\usepackage{mathtools}
\usepackage[shortlabels]{enumitem}
\keywords{length, square function, maximum distance problem, minimizers, beta numbers}
\subjclass[2020]{49Q20, 28A75, 28A78}
\allowdisplaybreaks

\newtheorem{theorem}{Theorem}[section]

\newtheorem{cor}[theorem]{Corollary}
\newtheorem{lem}[theorem]{Lemma}
\newtheorem{prop}[theorem]{Proposition}

\theoremstyle{definition}

\theoremstyle{remark}
\newtheorem{remark}[theorem]{Remark}

\newtheorem{definition}[theorem]{Definition}

\newcommand{\R}{\mathbb{R}}

\newcommand{\dist}{\operatorname{dist}}
\newcommand{\side}{\operatorname{side}}

\newcommand{\conv}{\operatorname{conv}}
\newcommand{\area}{\operatorname{area}}
\newcommand{\set}[1]{\left\{#1\right\}}

\begin{document}
\title[Minimizers of MDP via an ATST Algorithm]{Minimizers of the Maximum Distance Problem via an Analyst's Traveling Salesperson Algorithm}
\author[1]{Enrique Alvarado}
\thanks{Department of Mathematics, Iowa State University, Ames IA, USA \texttt{enrique3@iastate.edu}}
\author[2]{Silvia Ghinassi}
\thanks{Rome, Italy \texttt{silvia.ghinassi@gmail.com}}
\author[3]{Lisa Naples}
\thanks{Mathematics Department, Fairfield University, Fairfield CT, USA \texttt{lisa.naples@fairfield.edu}}

\begin{abstract}
    We provide an upper and lower bound for the length of Maximum Distance Problem minimizers in terms of a finite scale geometric square sum. 
\end{abstract}
\maketitle

\section{Introduction}

Let $E \subset \R^2$ be a set. Although there are many sets in Euclidean space that cannot be contained in \emph{any} curve $\Gamma$ of finite length, for any $r > 0$, there are \emph{many} curves of finite length whose closed \textit{$r$-neighborhood} $\mathcal{N}(\Gamma, r) := \cup_{x \in \Gamma} B(x, r)$ contains every point in $E$. 

The \emph{Maximum Distance Problem} (MDP) is the problem of finding the shortest such curve:
\begin{equation}\label{eq: MDP}
\notag
\begin{cases}
    \text{minimize } \mathcal{H}^1(\Gamma) \\
    \text{among curves of finite length } \Gamma\subset \mathbb{R}^2 \text{ such that } \mathcal{N}(\Gamma, r) \supset E.
    \end{cases}
\end{equation} 
Here and throughout, we define a curve of finite length to be a connected set of finite 1-dimensional Hausdorff measure $\mathcal{H}^1$. 
Not knowing the existence of a minimizer a priori, we define
\[
\Lambda(E, r) := \inf\{\mathcal{H}^1(\Gamma) : \Gamma\subset \mathbb{R}^2 \text{ is a curve of finite length and } \mathcal{N}(\Gamma, r) \supset E\}.
\]
However, when $E$ is compact, standard compactness arguments prove the existence of minimizers (see \cite[Theorem 2.15]{AKV}).
The problem was first introduced by Buttazzo, Oudet, and Stepanov~\cite{BOS} in the context of the study optimal urban transportation networks in cities.

Alvarado, Catalano, Merch\'an, and Naples \cite{ACMN} show that if $E$ is contained in a compact connected set of finite $\mathcal{H}^1$-measure, that is, a curve of finite $\mathcal{H}^1$, then the $r$-maximum distance minimizers converge in both Hausdorff distance and $\mathcal{H}^1$-measure to a curve of smallest measure that contains our original set. 
This is reminiscent of a classical result in harmonic analysis, the so-called Analyst's Traveling Salesperson Theorem. In the early 1990s, Jones~\cite{J} and Okikiolu~\cite{O} obtained an important breakthrough with the celebrated \textit{Analyst's Traveling Salesperson Theorem} (ATST). 
They provided necessary and sufficient geometric conditions for a set in $\R^n$, which may be thought of as an infinite list of cities, to be contained in a curve (or path between cities) of finite length. 
Since then, the tools developed by Jones have garnered immense interest among geometric measure theorists, and many contributions to the study of qualitative and quantitative rectifiability rely on the ideas behind ATST.

Before introducing the Analyst's Traveling Salesperson Theorem, we need a few definitions. For a subset $E$ of $\R^2$, we define $|E|$ to be the diameter of $E$, that is, $|E| = \sup_{x,y \in E} \dist(x,y)$. 

\begin{definition}[Dyadic cubes]
We let $\mathcal{D}$ denote the collection of dyadic cubes; that is,
$$\mathcal{D} = \set{[k 2^{-n}, (k+1) 2^{-n}) \times [j 2^{-n}, (j+1) 2^{-n}) : k, j, n \in \mathbb{Z}}.$$
The collection of all dyadic cubes $Q$ for which $\side(Q) = 2^{-n}$ is denoted by
$$\mathcal{D}_n = \set{[k 2^{-n}, (k+1) 2^{-n}) \times [j 2^{-n}, (j+1) 2^{-n}) : k, j \in \mathbb{Z}}.$$    
\end{definition}

\begin{definition}[Hausdorff measure]
For any $E \subset\R^2$, $\delta > 0$ and $s \ge 0$, define 
$$\mathcal{H}^s_\delta(E) = \inf \set{ \sum _{i = 1}^\infty |U_i|^s : E \subset\bigcup_{i=1}^\infty U_i, |U_i| < \delta}.$$
The $s$-dimensional Hausdorff measure of a measurable set $E \subset\R^2$ is defined as
$$\mathcal{H}^s(E) = \lim_{\delta \downarrow 0} \mathcal{H}^s_\delta(E).$$ 
\end{definition}
\begin{definition}[$\beta$-numbers]
    Given $E \subset \R^2$ and $Q \in \mathcal{D}$, define
    \begin{align*}
        \beta_E(Q): = \begin{cases}
            \displaystyle \inf_{\ell \text{ line}} \sup_{x \in E \cap Q} |Q|^{-1} \mbox{dist}(x, \ell) & \text{ if } E \cap Q \neq \varnothing, \\
            0 & \text{ if } E \cap Q = \varnothing.
        \end{cases}
    \end{align*} 
\end{definition}

\begin{theorem}[Analyst's Traveling Salesperson Theorem {\cite[Theorem 1]{J}}] \label{ATST}
\label{atst}
    A bounded set $E \subset \R^2$ is contained in a curve of finite length if and only if
    \begin{align*}
         \sum_{\substack{Q \in \mathcal{D}}} \beta^2_E(3Q) |Q| < \infty.
    \end{align*}
    More precisely, if $\sum_{\substack{Q \in \mathcal{D}}} \beta^2_E(3Q) |Q|< \infty$, then there exist a curve of finite length $I$ and a universal constant $c_1>0$ so that $I \supset E$ and
    $\mathcal{H}^1(I) \leq c_1 \left(|E| + \sum_{\substack{Q \in \mathcal{D}}} \beta^2_E(3Q) |Q|\right)$. 
    If $I$ is any curve of finite length containing $E$, then there exists a universal constant $c_2 > 0$ such that $|E| + \sum_{\substack{Q \in \mathcal{D}}} \beta^2_E(3Q) |Q| \leq c_2 \mathcal{H}^1(I)$.
\end{theorem}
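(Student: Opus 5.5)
The plan is to prove the two directions separately, following Jones's original strategy. Throughout, the dyadic scales and the tripled cubes $3Q$ provide a multiresolution description of $E$.

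\textbf{Sufficiency (construction of $I$).} We may assume $E$ is closed, since $\beta_E = \beta_{\overline{E}}$ up to boundary effects that are harmless after passing to $3Q$, and a curve containing $\overline E$ contains $E$. We normalize so that $|E| \approx 1$, and for each $n \ge 0$ we choose a maximal $2^{-n}$-separated net $X_n \subset E$ with $X_n \subset X_{n+1}$. The curve is built inductively as a sequence of connected polygonal curves $\Gamma_n$ whose vertices include $X_n$. $\Gamma_0$ is a segment through the at most boundedly many points of $X_0$, with length $\lesssim |E|$. To pass from $\Gamma_n$ to $\Gamma_{n+1}$, we look at each new point $x \in X_{n+1}\setminus X_n$. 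It lies within $2^{-n}$ of some $y \in X_n$, and we localize to a dyadic cube $Q$ of side $\approx 2^{-n}$ with $x, y \in 3Q$.

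Two cases arise.
\begin{enumerate}
\item If $\beta_E(3Q) \le \epsilon_0$ is small, the points of $X_{n+1}$ near $y$ lie in a thin strip around a line. We insert $x$ into the ordering along that line, which replaces one edge $[y,y']$ by $[y,x]\cup[x,y']$. The Pythagorean estimate $|y-x|+|x-y'| - |y-y'| \lesssim \beta_E(3Q)^2\, 2^{-n}$ handles the points that lie between $y$ and $y'$. Points that lie beyond the current endpoints of the local chain are attached at cost $\approx 2^{-n}$, and this is charged to the fact that near endpoints the strip geometry forces either new mass (paid by a length count of the final curve, absorbed via a geometric series) or large $\beta$ at a nearby scale.
\item If $\beta_E(3Q) > \epsilon_0$, we simply add a segment of length $\lesssim 2^{-n}$ connecting $x$ to $\Gamma_n$, at a cost $\lesssim \epsilon_0^{-2}\beta_E(3Q)^2|Q|$.
\end{enumerate}
Summing over $Q$ gives $\mathcal H^1(\Gamma_n)\le c(|E|+\sum\beta^2|Q|)$ uniformly in $n$. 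A Hausdorff-limit argument (Go\l{}\k{a}b's theorem: lower semicontinuity of $\mathcal H^1$ for connected compact sets) then produces a connected $I\supset E$ with the same bound.

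\textbf{Necessity.} Let $I \supset E$ be a curve with $\mathcal H^1(I)<\infty$. Since $\beta_E \le \beta_I$, it suffices to bound $\sum \beta_I(3Q)^2|Q| \lesssim \mathcal H^1(I)$; the term $|E|\le \mathcal H^1(I)$ is trivial. We parametrize $I$ by an arc-length Lipschitz map $\gamma:[0,L]\to\R^2$ with $L\le 2\mathcal H^1(I)$, which is possible since a continuum of finite length admits a closed path traversing it at most twice. For each $Q$ with $\beta_I(3Q)>0$, we consider the arcs of $\gamma$ meeting $3Q$. Cubes where some single arc of diameter $\gtrsim|Q|$ carries the flatness defect are controlled via the Pythagorean excess: for an arc $\gamma|_{[a,b]}$ through $3Q$ with $\beta\approx\beta_I(3Q)$, there is a point deviating by $\beta|Q|$ from the chord, giving
\[
\ell(\gamma|_{[a,b]})-|\gamma(b)-\gamma(a)|\gtrsim \beta_I(3Q)^2|Q|.
\]
Summing these excesses over dyadic subdivisions of $[0,L]$ telescopes, since lengths of polygonal approximations increase to $L$. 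This is Jones's "one-arc" estimate, proved with a stopping-time/filtration argument in parameter space. The remaining cubes, where $3Q$ sees several disjoint arcs of diameter $\gtrsim |Q|$ that are not collinear, are handled by charging $|Q|$ to the length of $\gamma$ inside $3Q$. Each point of $I$ lies in boundedly many $3Q$ per scale, so we need a packing bound: the number of scales at which a given point sees such a "non-flat multi-arc" configuration is summably controlled. For this we use that two arcs crossing $3Q$ at a definite angle or separation force $\mathcal H^1(I\cap 3Q)\ge 2|Q|$, a density excess that can occur only boundedly often on average. This is Okikiolu's/Jones's counting of the "bad" cubes.

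The main obstacle is the necessity direction in the multi-arc case, specifically making the charging non-overlapping across scales. I would first try Jones's approach: restrict to cubes where $I\cap 3Q$ is $\epsilon$-close to a single arc and use the telescoping excess, then bound the remaining cubes by a Carleson packing derived from the density excess and the finiteness of $\mathcal H^1(I)$.
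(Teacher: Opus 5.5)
\textbf{Necessity.} The genuine gap is here, exactly where you flag the obstacle, and the device you propose fails. Take $I=\partial([0,L]\times[0,d])$ with $d\ll L$, so $\mathcal{H}^1(I)\approx 2L$. Let $Q$ be a dyadic cube of side $s$, with $d\ll s\ll L$, meeting the middle of the rectangle. Then $I\cap 3Q$ is two parallel segments at distance $d$, so $\beta_I(3Q)\approx d/s$. Yet every subarc of $\gamma$ of diameter comparable to $|Q|$ near $Q$ is a segment with zero chord excess.

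An arc that actually carries the defect must go around a short side of the rectangle. Such an arc has diameter up to $L$ and is shared by all $\approx L/s$ cubes of side $s$, so your bounded-overlap telescoping over dyadic parameter intervals gives nothing. If instead these cubes go into your multi-arc class and are charged $|Q|$, you pay $\approx L$ at each of $\approx\log(L/d)$ scales. That totals $L\log(L/d)$, which is unbounded as $d\to 0$ with $\mathcal{H}^1(I)$ essentially fixed. Equivalently, every point of $I$ sees the density excess at $\log(L/d)$ scales, so the packing bound you invoke is false. If ``definite separation'' means separation $\gtrsim|Q|$, these cubes lie in neither of your classes.

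The true contribution, $\sum_s (L/s)(d/s)^2 s\approx L$, is finite only because of the weight $\beta_I(3Q)^2$. What is needed is a many-to-one charging, with geometric decay in $s$, of $\beta_I(3Q)^2|Q|$ to the subarc that joins the two strands. Producing such a charging for general curves is the real content of the necessity direction: Jones does it with complex analysis, Okikiolu \cite{O} with a geometric argument. Your proposal names this step but does not supply it.

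\textbf{Sufficiency.} This part follows Jones's route, but its key step rests on an invariant you never establish. Replacing $[y,y']$ by $[y,x]\cup[x,y']$ at cost $\lesssim\beta_E(3Q)^2 2^{-n}$ requires that $\Gamma_n$ already contain edges between consecutive net points along the strip, with $y'\in 3Q$ and $|y-y'|\lesssim 2^{-n}$. This fails in two situations:
\begin{enumerate}
\item at the first flat scale after a non-flat one, since your case 2 attaches points by spurs;
\item after coarse flat scales whose strip contained two strands. Take $E$ to be the two long sides of the rectangle above: for $2^{-n}\gg d$ your chain zig-zags between them.
\end{enumerate}
In either case, at the next flat scale essentially every new point is an ``endpoint'' costing $\approx 2^{-n}$, i.e.\ about the total length at that scale. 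You must show two things: that these chain-formation and endpoint costs are paid by non-flat cubes at boundedly nearby scales, and that endpoint extensions are geometric series started only there. Saying they are ``paid by a length count of the final curve'' is circular.

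\textbf{Comparison with the paper.} The paper does not prove this theorem; it cites \cite{J}. Its constructive machinery, however, is at $r=0$ the Bishop--Peres algorithm \cite{BP}, which avoids the endpoint bookkeeping entirely. For nested convex hulls, the inequality $|C_{\sigma*0}|+\frac12|B_\sigma|+|C_{\sigma*1}|\le|C_\sigma|+K\hat\beta^2(C_\sigma)|C_\sigma|$ telescopes in diameters and pays for every bridge at once. An area-counting lemma then converts $\sum\hat\beta^2(C)|C|$ into $\sum\beta_E(3Q)^2|Q|$.
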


Motivated by the connection between the asymptotics of MDP minimizers  and ATST-type results for sufficiently regular sets $E$ established in \cite{ACMN}, we investigate in this paper the connection between finite scale square functions and the MDP. 

\begin{theorem}[Main theorem]
\label{thm: Main}
    Given a compact set $E\subset\R^2$,
\[
|E| - 2r + \sum_{Q \in \mathcal{D}} \max\{\beta_E(Q) - r/|Q|, 0\}^2|Q| \lesssim \Lambda(E, r) \lesssim |E|  + \sum_{Q \in \mathcal{D}} \max\{\beta_E(Q) - r/|Q|, 0\}^2|Q|.
\]
\end{theorem}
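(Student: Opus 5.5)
Both inequalities will be reduced to Theorem~\ref{ATST} applied to an auxiliary set, with the threshold $r/|Q|$ absorbing the $r$-neighbourhood. Write $\tilde\beta(Q) := \max\{\beta_E(Q) - r/|Q|, 0\}$, and note that $\tilde\beta(Q)=0$ whenever $\beta_E(Q)|Q|\le r$, in particular whenever $|Q|\le r$. So only cubes with $|Q| > r$ contribute, and within those only the cubes where $E$ deviates from every line by more than $r$.

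\textbf{Lower bound.} Let $\Gamma$ be a curve with $\mathcal{N}(\Gamma,r)\supset E$ and $\mathcal{H}^1(\Gamma)$ close to $\Lambda(E,r)$.
\begin{enumerate}
\item Every $x\in E$ has a point of $\Gamma$ within distance $r$. Hence $|E|\le |\Gamma|+2r\le \mathcal{H}^1(\Gamma)+2r$, which gives the term $|E|-2r$.
\item Compare $\beta_E(Q)$ with $\beta_\Gamma$ of a slightly enlarged cube. If a line $\ell$ satisfies $\dist(y,\ell)\le \beta_\Gamma(3Q)|3Q|$ for $y\in\Gamma\cap 3Q$, then for $x\in E\cap Q$ we get $\dist(x,\ell)\le r+3\beta_\Gamma(3Q)|Q|$.
\item This needs $|Q|\gtrsim r$, so that the nearby point of $\Gamma$ lies in $3Q$. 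That is automatic whenever $\tilde\beta(Q)>0$, since then $\beta_E(Q)|Q|>r$ and $\beta_E\le 1$.
\item It follows that $\tilde\beta(Q)\lesssim \beta_\Gamma(3Q)$ for every $Q$.
\item Summing, the necessity half of Theorem~\ref{ATST} applied to $\Gamma$ (which contains itself) gives $\sum_Q\tilde\beta(Q)^2|Q|\lesssim \mathcal{H}^1(\Gamma)$. Letting $\mathcal{H}^1(\Gamma)\to\Lambda(E,r)$ finishes this direction.
\end{enumerate}

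\textbf{Upper bound.} Build a set $E_r$ and take the curve from the sufficiency half of Theorem~\ref{ATST}.
\begin{enumerate}
\item Choose $E_r$ to be a maximal $r$-separated net of $E$, possibly after a projection step described below. Then $E\subset\mathcal{N}(E_r,r)$, so any curve $I\supset E_r$ is admissible, and Theorem~\ref{ATST} gives $\Lambda(E,r)\lesssim |E|+\sum_Q\beta^2_{E_r}(3Q)|Q|$.
\item The task is to show $\sum_Q\beta^2_{E_r}(3Q)|Q|\lesssim |E|+\sum_Q\tilde\beta(Q)^2|Q|$.
\item A plain net fails: in cubes where $E$ is $r$-close to a line, the net points still scatter at distance up to $r$. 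For $|Q|\approx r$ this gives $\beta_{E_r}\approx 1$, with no corresponding $\tilde\beta$ term.
\item So replace the net by a "straightened" set. Run a Jones-type multiscale construction, stopping at cubes of side $\approx r$ or at cubes where $\beta_E(Q)|Q|\le r$.
\item In a stopping cube $Q$, replace $E\cap Q$ by the projection onto a best line $\ell_Q$, which is a segment of length $\lesssim|Q|$. Choose $\Gamma$ to contain these segments, so that $\mathcal{N}(\Gamma,r)\supset E\cap Q$.
\item In non-stopping cubes, $\beta_E(Q)$ and $\tilde\beta(Q)$ are comparable whenever $\beta_E(Q)|Q|\ge 2r$. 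The intermediate regime $r<\beta_E(Q)|Q|<2r$ can be handled by adjusting the stopping rule with constant factors.
\end{enumerate}

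\textbf{Main obstacle.} The hardest step is the total length of the stopping segments and the connecting pieces.
\begin{itemize}
\item Stopping cubes form a disjoint family, but their count is not obviously controlled by $|E|$ alone.
\item The intended control uses the standard Jones argument: a maximal stopping cube's parent is non-stopping and lies near the previously built curve. The number of stopping cubes along a flat stretch is therefore paid for by the length of that stretch, which itself is $\lesssim |E|+\sum\tilde\beta^2|Q|$ by the usual Pythagorean estimate $\sqrt{a^2+b^2}-a\lesssim b^2/a$ with $b\approx \tilde\beta(Q)|Q|$.
\item The key point is that deviations of size $\le r$ need not be followed. Exactly this replaces $\beta$ by $\beta-r/|Q|$ in the Pythagorean increment.
\item To carry this out, I would re-run Jones' construction of nested polygonal curves $\Gamma_n$ through $2^{-n}$-nets of $E$. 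At each scale, points within $r$ of the current polygon are \emph{not} inserted. Each insertion then costs $\lesssim\tilde\beta(Q)^2|Q|$, and the construction stops at scale $r$.
\end{itemize}
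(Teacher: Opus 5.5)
Your lower bound is the paper's argument: compare $\beta_E$ with $\beta_\Gamma$ for an admissible $\Gamma$, then apply the necessity half of Theorem~\ref{ATST} to $\Gamma$. Passing to $3Q$ is actually the correct way to make the comparison. The paper's step $r_{\mathcal{N}(\Gamma,r)}(Q)\le r_\Gamma(Q)+r$ overlooks points of $Q$ that are $r$-close only to $\Gamma\setminus Q$.

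There is one slip. $\beta_E\le 1$ only gives $|Q|>r$, i.e.\ $\side(Q)>r/\sqrt2$, which does not place the nearby point of $\Gamma$ in $3Q$. Use $\beta_E(Q)\le 1/(2\sqrt2)$ instead, taking the line through the center of $Q$ parallel to a side. Then $\tilde\beta(Q)>0$ forces $\side(Q)>2r$, and $\tilde\beta(Q)\le 3\beta_\Gamma(3Q)$ follows as you say.

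The upper bound, however, is not proved, and the mechanism you propose for its key step fails. Everything rests on the claim that, once deviations of size $\le r$ are not followed, each insertion costs $\lesssim\tilde\beta(Q)^2|Q|$. But a point inserted at height $b>r$ over an edge of length $\approx|Q|$ costs $\approx b^2/|Q|$ --- the whole deviation, not $b-r$. Meanwhile $\tilde\beta(Q)|Q|=r_E(Q)-r$ can be tiny or zero, since the three points $x_1,x,x_2$ only force $r_E(Q)\ge b/2$.

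So every insertion with $r<b\le 2r$ costs $\approx r^2/|Q|$ with nothing on the right-hand side to pay for it. At scale $|Q|$ these can add up to $\approx(r/|Q|)^2\ell$. For $|Q|\approx r$ this is comparable to the very length $\ell$ being estimated, so your control of the stopping cubes (``paid for by the length of that stretch'') is circular.

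Raising the skip threshold to a suitable multiple of $r$ does make inserted points chargeable, since $r_E\ge 2r$ implies $r_E\le 2(r_E-r)$. But then skipped points are only $Cr$-close to the polygon, and a projected segment no longer covers them within $r$. Skipping also breaks the invariants Jones' bookkeeping uses. A skipped point can end up farther than $r$ from a later polygon once its edge is replaced by two edges. And edges no longer have length comparable to the current scale.

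This is precisely what the paper's construction is built around, through three ingredients:
\begin{itemize}
\item A hull of the Bishop--Peres decomposition is declared good as soon as it is associated with a cube with $r_E(3Q)<2r$. It then lies in a strip of half-width $2r$.
\item Lemma~\ref{lem: ab} shows that $\partial C\cup(C\cap\ell)$, of length $\le 7|C|$, covers such a hull within $r$.
\item The total diameter of the good hulls is bounded not by counting them but by telescoping $|C_{\sigma*0}|+|C_{\sigma*1}|+\frac12|B_\sigma|\le|C_\sigma|+K\hat\beta^2(C_\sigma)|C_\sigma|$ (\cite[Lemma 10.5.2]{BP}). This inequality is used only at bad parents. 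Their $\hat\beta^2$-sums are bounded via Lemma~\ref{lem:BP1054} by $\beta_E(3Q)^2|Q|\le 4(\beta_E(3Q)-r/|3Q|)^2|Q|$, the factor $2$ coming from the threshold $2r$.
\end{itemize}
Your sketch needs an equivalent of all three: a stopping threshold strictly above $r$, a covering device for strips of width $\approx r$ at cost $O(\text{diameter})$, and a potential, rather than a count, controlling the stopped pieces.

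Within a Jones-type scheme you could instead try inserting everything down to scale $r$. You would then absorb the part $2(r/|Q|)^2|Q|$ of $\beta^2|Q|\le 2\tilde\beta^2|Q|+2(r/|Q|)^2|Q|$ multiplicatively. At scale $2^{-n}$ it costs $\lesssim(r2^n)^2$ times the current length, and $\sum_{2^{-n}\ge r}(r2^n)^2\lesssim 1$. No such argument is in your proposal, though.
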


Here and throughout the paper we say that $A \lesssim B$ if there exists a constant $c>1$ such that $A \leq c B$.
Note that, in the above theorem, $\beta_E(Q)-r/|Q|\le 0$ for all $Q$ with $|Q|<r$. Therefore, the sums that appear here are nonzero for finitely many generations (at most $\left\lceil-\log(r)/\log(2)\right\rceil$ generations). 

\begin{remark}
    The lower and upper bound differ by an additive factor of $-2r$. We cannot improve the lower bound by removing such factor. In fact, consider the case where $E$ is made up of two points, at a distance $d > 2r$ apart. The segment contained in the line passing through the two points, of length $d-2r$, whose endpoints are at distance $r$ from each of the two points in $E$, is a minimizer for the MDP and has length exactly $|E|-2r$.
\end{remark}

Rather than Jones's original proof, our techniques rely heavily on a substantially different proof by Bishop and Peres, \cite{BP}. Bishop and Peres construct a nested family of convex sets, which act as neighborhoods enclosing $E$ at each fixed generation. For the reader's convenience, we include proofs for the results we use, in the notation of our construction and with details filled in.

\subsection{Acknowledgments} This material is based upon work supported by the National Science Foundation 
under Grant No. DMS-1928930 while the authors participated in the SRiM program hosted 
by the Simons Laufer Mathematical Sciences Institute (formerly Mathematical 
Sciences Research Institute) in Berkeley, California, during the summer of 2025.

\section{Lower bound}

For the lower bound, it is sufficient to rely on the lower bound of Theorem \ref{ATST} (ATST). 

\begin{definition}
    Given $E \subset \R^2$ compact and $Q \in \mathcal{D}$, define
    \begin{align*}
        r_E(Q): = \begin{cases}
            \displaystyle \inf_{\ell \text{ line}} \sup_{x \in E \cap Q} \mbox{dist}(x, \ell) & \text{ if } E \cap Q \neq \varnothing, \\
            0 & \text{ if } E \cap Q = \varnothing.
        \end{cases}
    \end{align*}
\end{definition}

\begin{remark}
    Note that $r_E(Q)=\beta_E(Q)|Q|$. Because of our proof techniques, $\beta_E(Q)$ will be featured more often than $r_E(Q)$ below. However, the latter is a more relevant quantity as we are dealing with a scale-dependent problem, as opposed to the problem dealt with by Jones in \cite{J}, who first introduced $\beta_E(Q)$. It is interesting to note that $r_E(Q)$ is the same quantity defined by Okikiolu in \cite{O}. 
\end{remark}

\begin{lem}\label{lem:diameter gap}
    Let $E \subset \R^2$ be a compact set and $\Gamma$ be any curve such that $\mathcal{N}(\Gamma,r) \supset E$. Then $|E|-2r\le |\Gamma|$.
\end{lem}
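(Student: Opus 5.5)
The plan is a direct triangle-inequality argument comparing diameters; no ATST machinery is needed here. Throughout, $|\cdot|$ denotes diameter, as defined in the introduction. We may assume $E\neq\varnothing$, since otherwise there is nothing to prove.

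Fix two points $x,y\in E$. Because $E\subset\mathcal{N}(\Gamma,r)=\bigcup_{z\in\Gamma}B(z,r)$, there exist $a,b\in\Gamma$ with $\dist(x,a)\le r$ and $\dist(y,b)\le r$. Here the balls are closed, as in the paper's definition of the closed $r$-neighborhood, so the inequalities are non-strict. The triangle inequality then gives
\[
\dist(x,y)\le \dist(x,a)+\dist(a,b)+\dist(b,y)\le 2r+|\Gamma|.
\]
Taking the supremum over $x,y\in E$ yields $|E|\le |\Gamma|+2r$, which is the claim.

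There is no real obstacle. The only points worth noting concern how the lemma will be used afterwards. If $\Gamma$ is connected (a curve in the paper's sense), then $|\Gamma|\le\mathcal{H}^1(\Gamma)$: projecting $\Gamma$ onto the line through two of its points is $1$-Lipschitz and produces a segment. Hence the lemma also gives $|E|-2r\le\mathcal{H}^1(\Gamma)$, and taking the infimum over admissible $\Gamma$ gives $|E|-2r\le\Lambda(E,r)$. This is the diameter term in the lower bound of Theorem~\ref{thm: Main}.
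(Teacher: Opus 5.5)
Your proof is correct and is essentially the paper's argument: pass from each point of $E$ to a point of $\Gamma$ within distance $r$, then apply the triangle inequality through $\Gamma$. The only difference is that the paper uses compactness to choose $x_1,x_2\in E$ with $\dist(x_1,x_2)=|E|$, whereas you take a supremum over all pairs, so your version does not need compactness at all.
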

\begin{proof}
     Because $E$ is compact, there exist two points $x_1, x_2 \in E$ such that $\dist(x_1,x_2)=|E|$. 
     Since $E\subset\mathcal{N}(\Gamma,r)$, $ x_1, x_2 \in \mathcal{N}(\Gamma,r)$.  Then 
     \[|E|=\dist(x_1,x_2) \leq \dist(x_1, \Gamma)+|\Gamma|+\dist(x_2,\Gamma)\le r+|\Gamma|+r=|\Gamma|+2r. \qedhere\] 
\end{proof}
We can now prove the lower bound from Theorem \ref{thm: Main}.
\begin{theorem}\label{thm: lower bound}
Given a compact set $E$ in $\R^2$,
\[
|E| - 2r + \sum_{Q \in \mathcal{D}} \max\{\beta_E(Q) - r/|Q|, 0\}^2|Q| \lesssim \Lambda(E, r).
\]
\end{theorem}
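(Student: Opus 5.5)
Fix a curve $\Gamma$ of finite length with $\mathcal{N}(\Gamma,r)\supset E$; it suffices to show
\[
|E|-2r+\sum_{Q\in\mathcal{D}}\max\{\beta_E(Q)-r/|Q|,0\}^2|Q|\lesssim \mathcal{H}^1(\Gamma),
\]
and then take the infimum over $\Gamma$. By Lemma \ref{lem:diameter gap}, $|E|-2r\le|\Gamma|\le \mathcal{H}^1(\Gamma)$. So the task is to bound the square sum by $\mathcal{H}^1(\Gamma)$. We may assume $\Gamma$ is compact by replacing it with its closure, which has the same $\mathcal{H}^1$ measure for a continuum of finite length. The plan is to compare $\beta_E$ with the $\beta$-numbers of $\Gamma$ on slightly enlarged cubes and then apply the lower bound of Theorem \ref{ATST} to $\Gamma$ itself, using $|\Gamma|\le\mathcal{H}^1(\Gamma)$ to absorb the $|\Gamma|$ term.

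The key pointwise comparison is the following. Fix $Q$ with $\beta_E(Q)>r/|Q|$. Then $E\cap Q\neq\varnothing$, and since $\beta_E(Q)\le 1$, we also have $r<|Q|$. Each $x\in E\cap Q$ has a point $y\in\Gamma$ with $|x-y|\le r<|Q|$. Hence $y\in 3Q$, since $|Q|=\sqrt2\,\side(Q)$ and the distance exceeds $\side(Q)$ only diagonally. If that fails, replace $3Q$ by $5Q$; a fixed dilate works in any case. Let $\ell$ be an optimal line for $\Gamma\cap 3Q$. Then
\[
\dist(x,\ell)\le r+\beta_\Gamma(3Q)|3Q|,
\]
so $r_E(Q)\le r+3\beta_\Gamma(3Q)|Q|$. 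This gives
\[
\max\{\beta_E(Q)-r/|Q|,0\}\le 3\beta_\Gamma(3Q).
\]
If the dilate needs to be $5Q$, I would instead cover $5Q$ by the $3Q'$ of boundedly many neighbours $Q'$ of the same generation, using the standard monotonicity $\beta(A)\lesssim\beta(B)$ for $A\subset B$ with comparable diameters.

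Squaring, multiplying by $|Q|$ and summing gives
\[
\sum_Q\max\{\beta_E(Q)-r/|Q|,0\}^2|Q|\le 9\sum_Q\beta_\Gamma^2(3Q)|Q|\le 9c_2\,\mathcal{H}^1(\Gamma)
\]
by Theorem \ref{ATST} applied to $\Gamma\subset\Gamma$. Adding this to the diameter bound proves the claim with constant $1+9c_2$. Note that Theorem \ref{ATST} requires $\Gamma$ to be bounded, which holds since it is a continuum of finite length.

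The main obstacle is the geometric containment $y\in 3Q$ (or a fixed dilate) when $r$ is comparable to $|Q|$. The same step must also handle the case where $\Gamma\cap 3Q$ is too small for $\beta_\Gamma(3Q)$ to capture the spread. This is fine here because $\beta_\Gamma$ takes the supremum over all points of $\Gamma\cap 3Q$, and every nearest point $y$ has been shown to lie there.
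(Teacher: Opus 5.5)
Your proof follows the paper's: bound $\max\{\beta_E(Q)-r/|Q|,0\}$ by a $\beta$-number of $\Gamma$, apply the lower half of Theorem \ref{ATST} to $\Gamma$ itself, and add Lemma \ref{lem:diameter gap}. Your localization to $3Q$ is more careful than the paper's step. The paper compares on the same cube via $r_{\mathcal{N}(\Gamma,r)}(Q)\le r_\Gamma(Q)+r$, which tacitly assumes that the point of $\Gamma$ within $r$ of $x\in E\cap Q$ lies in $Q$. That need not happen: $\Gamma\cap Q$ can even be empty while $E\cap Q\neq\varnothing$. Passing to $3Q$ fixes this, and it also matches the $\beta(3Q)$ form in which Theorem \ref{ATST} is stated.

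The one step that does not work as written is the containment $y\in 3Q$. From $r<|Q|=\sqrt2\,\side(Q)$ alone you cannot get $|x-y|<\side(Q)$. Take $x$ on the left edge of $Q$ and $y$ directly to its left at distance $1.3\,\side(Q)$: then $y\notin 3Q$, and the remark about diagonals does not exclude this.

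Your fallback is also flawed. Covering $5Q$ by several sets $3Q'$ gives no bound on the width of $\Gamma\cap 5Q$. The width $r_\Gamma(\cdot)$ is monotone under inclusion in a \emph{single} larger set, but it is not controlled by the widths of the pieces of a cover, since each piece can be flat along a different line.

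The repair is immediate. Any line through the center of $Q$ is within $|Q|/2$ of every point of $Q$, so $\beta_E(Q)\le 1/2$. Hence $r<\beta_E(Q)|Q|\le |Q|/2=\side(Q)/\sqrt2<\side(Q)$, and $y\in 3Q$ follows directly. Alternatively, $5Q\subset 3\hat Q$ for the dyadic parent $\hat Q$ of $Q$, and each parent has only four children, so this costs only a constant. With either fix, your argument is complete with constant $1+9c_2$.
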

\begin{proof}
Let $\Gamma$ be an $r$-maximum distance minimizer of $E$. 
First note for any cube $Q \in \mathcal{D}$, $r_E(Q) \leq r_{\mathcal{N}(\Gamma, r)}(Q) \leq r_\Gamma(Q) + r$.
Hence,
\[
\beta_E(Q) \leq \beta_{\mathcal{N}(\Gamma, r)}(Q) \leq \beta_\Gamma(Q) + \frac{r}{|Q|}.
\]
Thus, 
\[
\max\left\{\beta_E(Q) - \frac{r}{|Q|}, 0\right\} \leq \beta_\Gamma(Q) \quad \text{ for all } Q \in \mathcal{D}.
\]
Thus, applying the Analyst's Traveling Salesperson Theorem to $\Gamma$, we get 
\begin{align*}
|\Gamma| + \sum_{Q \in \mathcal{D}} \max\{\beta_E(Q) - r/|Q|, 0\}^2|Q| &\leq  |\Gamma| + \sum_{Q \in \mathcal{D}} \beta_\Gamma(Q)^2|Q| \\
&\lesssim \mathcal{H}^1(\Gamma) \\
&= \Lambda(E, r).
\end{align*}
Finally, since $|E| - 2r \leq |\Gamma|$, by Lemma \ref{lem:diameter gap}, we get 
\[
|E| - 2r + \sum_{Q \in \mathcal{D}} \max\{\beta_E(Q) - r/|Q|, 0\}^2|Q| \lesssim \Lambda(E,r). \qedhere
\]
\end{proof}
\begin{remark}
    If $|E| <2r$, then $\max\{\beta_E(Q) - r/|Q|, 0\}=0$ for all dyadic cubes $Q$, which makes the lower bound in Theorem \ref{thm: lower bound} negative. However, in this case, the minimizing curve for MDP is a single point, which has $\mathcal{H}^1$ measure 0.
\end{remark}

\section{Upper bound}
We construct a candidate curve $\Gamma^*$ for $\Lambda(E, r)$, when $E\subset \mathbb{R}^2$ is compact. We then show that $\Gamma^*$ satisfies the desired upper estimate in Theorem \ref{thm: Main}.

\subsection{Building convex hulls}
Recall that the convex hull of a set $F$ is the small convex set containing $F$. We use the notation $\conv(F)$ to indicate the convex hull of the set $F$.  We construct a nested family $\mathscr{C}$ of convex hulls that are indexed using binary strings $\sigma\in\bigcup_{i=1}^\infty \{0,1\}^i$.

\subsubsection{Initialization and first generation of construction}
 Initialize $\mathscr{C}_0 = \{C_\varnothing\}$ where $C_\varnothing = \conv(E)$. Fix a diameter line segment $\hat{L}_\varnothing$ given by $\hat{\beta}(C_\varnothing)$.  Let $a_\varnothing$ and $b_\varnothing$ be the endpoints of $\hat{L}_\varnothing$. Because $C_\varnothing$ is the convex hull of $E$, and we are taking a diameter line, observe that $a_\varnothing$ and $b_\varnothing$ are both in $E$. With a mildly improper use of the interval notation, we can write $\hat{L}_\varnothing=[a_\varnothing, b_\varnothing]$. 
Let $\pi_\varnothing$ be the orthogonal projection onto the line  $\hat{L}_\varnothing$. 
Because $E = E\cap C_\varnothing$, observe that $\pi_\varnothing (E) \subset \hat{L}_\varnothing$.
Partition $\hat{L}_\varnothing$ into three intervals of equal length.  Label the the (open) middle interval $K_\varnothing$, and label the remaining (closed) intervals as $I_0\ni a_\varnothing$ and $I_1\ni b_\varnothing$.

We consider two cases 
    \begin{enumerate}
        \item[(P1)] $\pi_\varnothing(E\cap C_\varnothing)\cap K_\varnothing \neq \varnothing$.
        Pick $z_\varnothing \in \pi_\varnothing(E\cap C_\varnothing)\cap K_\varnothing$ and let 
        \[
        C_{0} = \conv(\pi^{-1}_\varnothing([a_\varnothing, z_\varnothing])\cap (E\cap C_\varnothing)) \quad \text{ and } \quad C_{1} = \conv(\pi^{-1}_\varnothing([z_\varnothing, b_\varnothing])\cap (E\cap C_\varnothing)).
        \]

        \item[(P2)] $\pi_\varnothing(E\cap C_\varnothing)\cap K_\varnothing = \varnothing$. 
        Let 
        \[
        C_{0} = \conv(\pi^{-1}_\varnothing(I_0)\cap (E\cap C_\varnothing)) \quad \text{ and } \quad C_{1} = \conv(\pi^{-1}_\varnothing(I_1)\cap (E\cap C_\varnothing)).
        \]     
    \end{enumerate}
    
\begin{figure}[h!]
\centering
    \begin{tikzpicture}[scale=0.8]
  \draw[fill=gray!20] (0,0) -- (3,-1) -- (5,0) -- (6,1) -- (5,2) -- (2,2.7) -- (-0.5,2.5) -- (-1,1) -- cycle;
\draw[fill=gray!50] (0,0) -- (3,-1) -- (2,2.7) -- (-0.5,2.5) -- (-1,1) -- cycle;
\draw[fill=gray!50] (3,-1) -- (5,0) -- (6,1) -- (5,2) -- cycle;
\draw[ultra thick] (-1,1) -- (6,1);
 \draw[color=black] (0.7,1.5) node {$C_0$};
  \draw[color=black] (4.75,0.5) node {$C_1$}; 
  \draw[fill=black] (3,-1) circle (2pt);
  \draw[color=black] (3,-1.3) node {$B_\varnothing$};
  \draw[color=black] (3,1.3) node {$\hat L_\varnothing$};
\draw[dashed] (1.33, -1.5) -- (1.33, 3.3);
\draw[dashed] (3.66, -1.5) -- (3.66, 3.3);
\end{tikzpicture}
\qquad
    \begin{tikzpicture}[scale=0.8]
  \draw[fill=gray!20] (0,0) -- (5,-1) -- (6,1) -- (5,2) -- (-0.5,2.5) -- (-1,1) -- cycle;
  \draw[fill=gray!50] (0,0) -- (0.5,1.7) -- (-0.5,2.5) -- (-1,1) -- cycle;
  \draw[fill=gray!50] (5,-1) -- (6,1) -- (5,2) -- cycle;
  \draw[thick] (0.5,1.7) -- (5,-1);
   \draw[color=black] (-0.2,1.5) node {$C_0$};
  \draw[color=black] (5.4,0.5) node {$C_1$}; 
  \draw[color=black] (2.5,0.2) node {$B_\varnothing$};
  \draw[color=black] (3,1.3) node {$\hat L_\varnothing$};
\draw[ultra thick] (-1,1) -- (6,1);
\draw[dashed] (1.33, -1.2) -- (1.33, 3);
\draw[dashed] (3.66, -1.2) -- (3.66, 3);
\end{tikzpicture}
\caption{\label{figure: convex construction} Cases (P1) and (P2) of the first generation of the construction inside $C_\varnothing$.}
\end{figure}
    In either case, let $\mathscr{C}_1=\{C_0,C_1\}$.
    Let $B_\varnothing$ be a shortest line segment connecting a point $e_0\in E\cap C_0$ to a point $e_1\in E\cap C_1$. 
    Note that in case of (P1), $B_\varnothing$ is a single point, and hence has zero length. 
    In the case of (P2), $\dist(C_0, C_1) \geq \frac{1}{3}|\hat{L}_\varnothing|$, and hence $B_\varnothing$ has positive length.

    \subsubsection{Inductive Construction} Suppose $j$ generations of convex hulls and bridges have been constructed. For each $\sigma\in D_j$,
    let $\hat{L}_\sigma$ be a diameter line segment given by $\hat{\beta}(C_\sigma)$. Using the same notation as above, let $\hat{L}_\sigma = [a_\sigma,b_\sigma]$. Now we consider the orthogonal projection onto the line containing $\hat{L}_\sigma$. By construction, $\pi_\sigma(E \cap C_\sigma) \subset \hat{L}_\sigma$. Partition $\hat{L}_\sigma$ into three intervals, $I_{\sigma*0}, K_\sigma, I_{\sigma*1}$. 
    We consider two cases 
    \begin{enumerate}
        \item[(P1)] $\pi_\sigma(E\cap C_\sigma)\cap K_\sigma \neq \varnothing$.
        Pick $z_\sigma \in \pi_\sigma(E\cap C_\sigma)\cap K_\sigma$ and let 
        \[
        C_{\sigma*0} = \conv(\pi^{-1}_\sigma([a_\sigma, z_\sigma])\cap (E\cap C_\sigma)) \text{ and }C_{\sigma*1} = \conv(\pi^{-1}_\sigma([z_\sigma, b_\sigma])\cap (E\cap C_\sigma)).
        \]

        \item[(P2)] $\pi_\sigma(E\cap C_\sigma)\cap K_\sigma = \varnothing$. 
        Let 
        \[
        C_{\sigma*0} = \conv(\pi^{-1}_\sigma(I_{\sigma*0})\cap (E\cap C_\sigma)) \quad \text{ and } \quad C_{\sigma*1} = \conv(\pi^{-1}_\sigma(I_{\sigma*1})\cap (E\cap C_\sigma)).
        \]     
    \end{enumerate}
    Let $B_\sigma$ be a shortest line segment connecting a point $e_{\sigma*0}\in E\cap C_{\sigma*0}$ to a point $e_{\sigma*1}\in E\cap C_{\sigma*1}$. 
    Repeat the same for all other convex hulls in $\mathscr{C}_j$.  Let $\mathscr{C}_{j+1}=\{C_{\sigma*i}: |\sigma|=j, i\in\{0,1\}\}$.

    Following standard terminology, we say $C_{\sigma*i}$ is a {\it child} of $C_\sigma$ for $i=0,1$.

\subsection{Properties of the collection of convex hulls} Before constructing the candidate curve $\Gamma^*$, we need to show a few properties of the collection $\mathscr{C}$.

\begin{definition}[$\hat\beta$-number]
For any convex region $C \subset \mathbb{R}^2$, define 
\[
\hat{\beta}(C) = \sup_{\text{ $\hat{L}$ diameter line segment of }C}\sup_{y \in C}\dist(y, \hat{L}).
\]
\end{definition}

\begin{lem}\label{lem: geometric properties of conves hulls containment}
\begin{enumerate}
    \item[(C1)]$\displaystyle E\subset \bigcup_{|\sigma|=k} C_\sigma$.
    \item[(C2)] If $|\tau|=|\sigma|$, then $C_\tau\cap C_\sigma=\varnothing$.
    \item[(C3)] If $C\in\mathscr{C}$, then there is a triangle with height $\hat{\beta}(C)|C|$ and base $|C|$ contained in $C$.
    \item[(C4)]  Each $C\in\mathscr{C}$ is contained in a rectangle with height $2\hat{\beta}(C)|C|$ and width $|C|$.
    \item[(C5)] $C_{\sigma*i}$, for $i \in \{0,1\}$ is contained in a rectangle with height $2\hat\beta(C_\sigma)|C_\sigma|$ and width $\frac23|C_\sigma|$.
\end{enumerate}
\end{lem}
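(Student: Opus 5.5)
The plan is to prove the five properties by induction on the generation $k=|\sigma|$. The one structural fact used throughout is that each child is the convex hull of a subset of $E\cap C_\sigma$, hence
\[
C_{\sigma*i}\subset C_\sigma \quad\text{and}\quad E\cap C_{\sigma*i}\subset E\cap C_\sigma .
\]
For (C1), suppose $E\subset\bigcup_{|\sigma|=k}C_\sigma$. Since $\pi_\sigma(E\cap C_\sigma)\subset\hat L_\sigma$, every point of $E\cap C_\sigma$ projects into $[a_\sigma,z_\sigma]\cup[z_\sigma,b_\sigma]$ in case (P1). In case (P2), where $K_\sigma$ receives nothing, it projects into $I_{\sigma*0}\cup I_{\sigma*1}$. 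Either way the point lies in one of the two children, which gives (C1) at generation $k+1$.

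For (C2), induct as well. If $\tau,\sigma$ have different parents, the children sit inside disjoint parents by the inductive hypothesis, so they are disjoint. Siblings are the only issue, and here I separate them by the line $\pi_\sigma^{-1}(z_\sigma)$, or by $K_\sigma$ in case (P2).

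In case (P2), $C_{\sigma*0}\subset\pi_\sigma^{-1}(I_{\sigma*0})$ and $C_{\sigma*1}\subset\pi_\sigma^{-1}(I_{\sigma*1})$. Both sets are convex strips, and the strips are disjoint because they are separated by the open middle third. Hence the siblings are disjoint.

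In case (P1), the fiber over $z_\sigma$ is shared by the two closed intervals. To get literal disjointness I read the dyadic-style convention into the construction: the fiber $\pi_\sigma^{-1}(z_\sigma)\cap E$ is assigned to exactly one child, and the other child is built over the half-open interval, e.g. $[a_\sigma,z_\sigma)$. Then one hull lies in the convex open half-plane $\{\pi_\sigma<z_\sigma\}$ and the other in the closed half-plane $\{\pi_\sigma\ge z_\sigma\}$, so the siblings are disjoint. This convention is harmless for (C1) and for everything later, since closures are unchanged in the estimates. The point I expect to need care is exactly this fiber issue: the statement as written requires disjointness, and with both intervals closed the two hulls can share points of $E$ over $z_\sigma$. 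I will therefore state explicitly that the half-open assignment is the intended reading and check that it does not affect (C1).

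For (C3) and (C4), fix $C\in\mathscr C$, which is a convex set (take its closure if needed). Let $\hat L=[a,b]$ be a diameter segment realizing, or nearly realizing, $\hat\beta(C)$, and let $y\in\overline C$ attain $\sup_y\dist(y,\hat L)$, which is $\hat\beta(C)|C|$ up to an arbitrarily small error.

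For (C3), convexity puts the triangle $\conv\{a,b,y\}$ inside $\overline C$. Its base is $|C|$ and its height is $\dist(y,\hat L)$. A limiting argument, or compactness of $\overline C$, removes the error.

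For (C4), first control the extent along $\hat L$. Every $x\in C$ satisfies $|x-a|\le|C|$ and $|x-b|\le|C|$. The projection of $x$ cannot lie beyond $b$, for then $|x-a|>|b-a|=|C|$; likewise it cannot lie beyond $a$. So $\pi(C)\subset[a,b]$, which has width $|C|$. Next, every point of $C$ is within $\hat\beta(C)|C|$ of the line through $\hat L$, on either side. This gives the rectangle of width $|C|$ and height $2\hat\beta(C)|C|$.

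For (C5), apply (C4) to the parent $C_\sigma$: $C_\sigma$ lies in the rectangle $\pi_\sigma^{-1}(\hat L_\sigma)\cap\{\dist(\cdot,\hat L_\sigma)\le\hat\beta(C_\sigma)|C_\sigma|\}$. The child $C_{\sigma*i}$ is contained in $C_\sigma$, so it inherits the height bound $2\hat\beta(C_\sigma)|C_\sigma|$. The child also lies in $\pi_\sigma^{-1}$ of the relevant interval. In case (P2) that interval is $I_{\sigma*i}$, of length $\frac13|C_\sigma|$. In case (P1) it is $[a_\sigma,z_\sigma]$ or $[z_\sigma,b_\sigma]$; since $z_\sigma\in K_\sigma$, the middle third, each has length less than $\frac23|C_\sigma|$. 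Intersecting gives a rectangle of width at most $\frac23|C_\sigma|$, enlarged to width exactly $\frac23|C_\sigma|$ if needed.
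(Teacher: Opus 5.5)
Your arguments for (C1), (C3), (C4) and (C5) are correct. They are the detailed version of the paper's one-line justifications:
\begin{itemize}
\item children are hulls of subsets of $E\cap C_\sigma$;
\item a convex set projects into each of its diameter segments;
\item $\conv\{a,b,y\}\subset C$ by convexity;
\item a child inherits the parent's strip and sits over an interval of length at most $\frac23|C_\sigma|$.
\end{itemize}
You are also right that (C2) is false as written. In case (P1) the siblings in fact always meet: $z_\sigma=\pi_\sigma(x)$ for some $x\in E\cap C_\sigma$, and this $x$ lies in both children. The paper depends on this, since it is why $B_\sigma$ is a single point in case (P1).

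The gap is in your repair. The half-open assignment is not the intended reading, and it is not harmless. Closures do change: an isolated point of $E$ on the fiber simply drops out of one child. More seriously, it destroys the $\hat\beta^2$ gain in case (P1) of the length estimate taken from \cite[Lemma 10.5.2]{BP}, which uses $|B_\sigma|=0$.

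Take $E=\{a,m,x,b\}$ with $a=(0,0)$, $b=(3,0)$, $x=(\tfrac32,0)$, $m=(\tfrac32-h^2,h)$, $0<h<1$, and the admissible choice $z_\sigma=\tfrac32$. With $[a_\sigma,z_\sigma)$ the children are $[a,m]$ and $[x,b]$, and the bridge is $[m,x]$. Then
\[
|C_{\sigma*0}|+\tfrac{1}{2}|B_\sigma|+|C_{\sigma*1}|\ge 3+\tfrac{h}{2}-h^2, \qquad\text{whereas}\qquad |C_\sigma|+K\hat\beta^2(C_\sigma)|C_\sigma|=3+\tfrac{K}{3}h^2,
\]
so the estimate fails for small $h$, whatever $K$ is. The mirror configuration $m=(\tfrac32+h^2,h)$ defeats the opposite assignment. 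The children also stop being compact, so shortest bridges and diameter segments with endpoints in $E$ need not exist.

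The right fix is to keep the closed construction and weaken (C2) to: two distinct hulls of the same generation meet at most in a line, and hence have disjoint interiors. This follows by your induction. Such hulls lie in the closed half-planes $\{\pi_\rho\le z_\rho\}$ and $\{\pi_\rho\ge z_\rho\}$, or in the disjoint strips over $I_{\rho*0}$ and $I_{\rho*1}$, of their last common ancestor $C_\rho$.

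This weaker statement is all that is used later. Off a null set (a countable union of lines), the hulls containing a point form one nested chain. So the bound of at most $M$ hulls of $\mathscr{C}(Q)$ per point holds almost everywhere, which suffices for the area count in \cite[Lemma 10.5.4]{BP}.

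The weakened form also covers the degenerate case $|C_\sigma|=0$, where $K_\sigma=\varnothing$ and both children equal $C_\sigma$. Your (P2) separation argument tacitly assumes $|C_\sigma|>0$.
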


\begin{proof}
    (C1) and (C2) follow immediately from the construction of the collection $\mathscr{C}$. (C3) follows from the definition of $\hat\beta(C)$ and the convexity of $C$.  (C4) follows from the definition of $\hat\beta(C)$. To prove (C5), recall that $C_{\sigma*i}\subset C_\sigma$ and that in both cases of the construction, $C_{\sigma*i}$ projects onto a portion of $\hat L_{\sigma}$ that is at most $\frac23$ of the total length of $\hat L_{\sigma}$.
\end{proof}

\begin{cor}\label{cor: geometric properties of convex hulls area
} For $C\in \mathscr{C}$.
 \[\frac{1}{2}\hat{\beta}(C)|C|^2\le \area(C)\le 2\hat\beta(C)|C|^2.\]
\end{cor}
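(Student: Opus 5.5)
The corollary follows directly from (C3) and (C4) of Lemma \ref{lem: geometric properties of conves hulls containment}, together with monotonicity of area under inclusion. Fix $C \in \mathscr{C}$ and a diameter line segment $\hat L$ of $C$ attaining (or nearly attaining) the supremum in the definition of $\hat\beta(C)$.

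For the lower bound, we use (C3). $C$ contains a triangle whose base is $\hat L$, of length $|C|$, and whose apex is a point $y\in C$ at distance $\hat\beta(C)|C|$ from $\hat L$. Convexity puts the whole triangle $\conv(\hat L\cup\{y\})$ inside $C$. Its area is $\tfrac12\cdot |C|\cdot \hat\beta(C)|C|$, so $\area(C)\ge \frac12\hat\beta(C)|C|^2$.

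If the supremum over $y$ (or over diameter segments) is not attained, we take a sequence of triangles whose heights tend to $\hat\beta(C)|C|$ and pass to the limit. Since $C$ is the convex hull of a compact set $E\cap(\text{closed slab})$, it is compact, so in fact the supremum is attained and no limiting argument is really needed.

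For the upper bound, we use (C4). Every point of $C$ lies within distance $\hat\beta(C)|C|$ of the line through $\hat L$, so $C$ lies in a strip of width $2\hat\beta(C)|C|$. Moreover, $\pi(C)\subset \hat L$: if some point projected beyond an endpoint $a$ of $\hat L$, its distance to the other endpoint $b$ would exceed $|a-b|=|C|$, contradicting the fact that $|C|$ is the diameter. Hence $C$ lies in a rectangle of dimensions $|C|\times 2\hat\beta(C)|C|$, and $\area(C)\le 2\hat\beta(C)|C|^2$.

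There is no real obstacle here. The only points needing care are the attainment or approximation of the suprema in the definition of $\hat\beta$, and the observation that the projection of $C$ onto a diameter line lies within the diameter segment. Both are handled as above.
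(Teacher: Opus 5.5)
Your proof is correct and follows the paper's route exactly: the lower bound comes from the triangle in (C3) and the upper bound from the rectangle in (C4), together with monotonicity of area. The extra details you supply are the justifications of (C3) and (C4) that the paper leaves implicit: compactness of each $C$, so the suprema defining $\hat\beta(C)$ are attained, and the fact that $C$ projects into any diameter segment.
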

\begin{proof}
The lower bound follows from (C3) and the upper bound follows from (C4).
\end{proof}

\begin{figure}[htbp]
\centering
    \begin{tikzpicture}[scale=0.75]
 \draw[fill=gray!50] (0,0) -- (3,-1) -- (5,0) -- (6,1) -- (5,2) -- (2,2.7) -- (-0.5,2.5) -- (-1,1) -- cycle;
  \draw[thick] (-1,1) -- (6,1);
  \draw[dashed] (-1,-1) -- (6,-1) -- (6,3) -- (-1,3) -- cycle;
  \draw[<->] (-1.3,1) -- (-1.3,3);
  \draw[color=black] (-2.4, 2) node {$\hat\beta(C)|C|$};
  \draw[<->] (-1, -1.3) -- (6,-1.3);
  \draw[color=black] (2.5,-1.7) node {$|C|$};
\end{tikzpicture}
 \qquad 
 \begin{tikzpicture}[scale=0.4]
 \draw[color=gray!20, fill=gray!20] (-1.2,-4.13) -- (10.8,3.87) -- (10.8,7) -- (6.35,7) -- (-1.2,2.13) -- cycle;
 \draw[fill=gray!50] (0,0) -- (3,-1) -- (5,0) -- (6,1) -- (5,2) -- (2,2.7) -- (-0.5,2.5) -- (-1,1) -- cycle;
 \draw[fill=gray!50] (6.2,2.7) -- (8,2) -- (10,3.5) -- (7,6) -- cycle;
        \draw[-] (2.8,-1) rectangle (6.8,3);
        \draw[color=black] (7.4,-0.5) node {$Q$};
        \draw[dashed] (-1.2,-5) rectangle (10.8,7);
        \draw[color=black] (11.8, -4) node {$3Q$};
        \draw[thick] (-1.2,-1.13) -- (10.8, 6.87); 
        \draw[color=black] (8.4,6.2) node {\footnotesize$\ell_{3Q}$};
        \draw[<->] (5.45,3.3) -- (3.8,5.4);
        \draw[color=black] (2.5,4) node {\tiny $\beta_E(3Q)|3Q|$};
    \end{tikzpicture}
\caption{\label{figure: different betas} A comparison of $\hat\beta(C)$ and $\beta_E(3Q)$, for $C \in \mathscr{C}(Q)$.}
\end{figure}
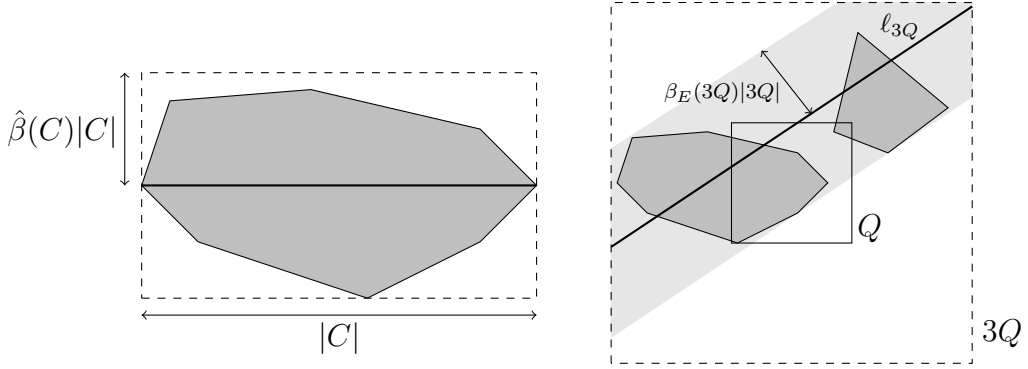

\begin{lem}[{\cite[Lemma 10.5.3]{BP}}]
\label{lem: diameter_decrease}
There is $M<\infty$ such that if $C_\sigma\in \mathscr{C}_j$ and $C_{\sigma*\tau}\in\mathscr{C}_{j+M}$ then $|C_{\sigma*\tau}|\le \frac12 |C_\sigma|.$
\end{lem}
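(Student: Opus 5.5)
The plan is to show that along any descending chain $C_\sigma=D_0\supset D_1\supset\cdots\supset D_M=C_{\sigma*\tau}$, with $D_{k+1}$ a child of $D_k$, the diameter cannot stay above $\frac12|C_\sigma|$ for $M$ generations. Set $d=|C_\sigma|$ and suppose $|D_k|>d/2$ for all $0\le k\le M$. Since $D_{k+1}\subset D_k$ by construction, diameters are nonincreasing along the chain. I will split the steps $D_k\to D_{k+1}$ into \emph{thin} steps, where $\hat\beta(D_k)\le \frac1{10}$, and \emph{fat} steps, where $\hat\beta(D_k)>\frac1{10}$. I will show each kind can occur only a bounded number of times.

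\emph{Thin steps shrink the diameter.} By (C5), $D_{k+1}$ lies in a rectangle of width $\frac23|D_k|$ (along $\hat L_{D_k}$) and height $2\hat\beta(D_k)|D_k|$. So
\[
|D_{k+1}|\le \sqrt{4/9+4\hat\beta(D_k)^2}\,|D_k|\le \tfrac34|D_k|
\]
whenever $\hat\beta(D_k)\le\frac1{10}$. Diameters never increase at fat steps, so after $j$ thin steps $|D_k|\le(\frac34)^j d$. Since $(\frac34)^3<\frac12$, at most two thin steps can occur.

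\emph{Fat steps shrink the area by a fixed factor.} By (C3), $D_k$ contains a triangle $T$ with base $\hat L_{D_k}$ and height $h=\hat\beta(D_k)|D_k|$. In both cases (P1) and (P2), $\pi(D_{k+1})$ avoids one of the two end thirds of $\hat L_{D_k}$, because $z$ lies in the middle third. The part of $T$ projecting onto an end third has area at least $\frac19\area(T)=h|D_k|/18$; the worst case is the apex at the opposite end. Hence, using Corollary~\ref{cor: geometric properties of convex hulls area} for the upper bound $\area(D_k)\le 2\hat\beta(D_k)|D_k|^2$,
\[
\area(D_{k+1})\le \area(D_k)-\tfrac1{18}\hat\beta(D_k)|D_k|^2\le \left(1-\tfrac1{36}\right)\area(D_k).
\]
Areas are also nonincreasing at thin steps by nestedness. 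So if $m$ fat steps occur and $D_{k^*}$ is the last fat ancestor, then $\area(D_{k^*})\le (\frac{35}{36})^{m-1}\area(C_\sigma)\le(\frac{35}{36})^{m-1}\,2d^2$. On the other hand, the lower bound of Corollary~\ref{cor: geometric properties of convex hulls area} together with $\hat\beta(D_{k^*})>\frac1{10}$ and $|D_{k^*}|>d/2$ gives $\area(D_{k^*})\ge \frac1{20}\cdot\frac{d^2}{4}=d^2/80$. This bounds $m$ by an absolute constant $m_0$.

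Taking $M=m_0+3$ gives a contradiction: $M$ steps cannot all be thin (at most two) or fat (at most $m_0$). Hence $|C_{\sigma*\tau}|\le\frac12|C_\sigma|$. The main obstacle is the fat-step estimate. It needs the geometric fact that a child always misses a full end third of the parent's diameter segment, so that a definite fraction of the inscribed triangle from (C3) is lost. It also needs the observation that the area cannot decrease indefinitely, because a fat set of diameter comparable to $d$ has area comparable to $d^2$.
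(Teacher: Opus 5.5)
Your proof is correct and is essentially the paper's argument. It uses the same three ingredients: the $\tfrac{35}{36}$ area decay from the part of the (C3) triangle lying over an end third, the (C5) rectangle bound giving a $\tfrac34$ diameter shrink once $\hat\beta$ is small, and the two-sided bounds $\tfrac12\hat\beta(C)|C|^2\le \area(C)\le 2\hat\beta(C)|C|^2$. The only difference is bookkeeping: the paper runs three blocks of $m_0+1$ generations, using the area decay within a block to force $\hat\beta$ small at its end unless the diameter has already halved, whereas your global count of thin versus fat steps with a fixed threshold is slightly cleaner and yields $M=m_0+3$ instead of $3(m_0+1)$.
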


\begin{proof}
Consider the triangle given by Lemma \ref{lem: geometric properties of conves hulls containment} (C3).  The part of the triangle that projects onto $I_{\sigma*1}$ has base length at least $|C_\sigma|/3$ and height at least $\hat{\beta}(C_\sigma)|C_\sigma|/3$.  Thus the area of the portion of the triangle that projects onto $I_{\sigma*1}$ is at least \[\frac{1}{2}\left(\frac{|C_\sigma|}{3}\right)\left(\frac{\hat{\beta}(C_\sigma)|C_\sigma|}{3}\right)=\frac{\hat{\beta}(C_\sigma)|C_\sigma|^2}{18}\ge \frac{\text{area}(C_\sigma)}{36},\]
where the last inequality uses Corollary \ref{cor: geometric properties of convex hulls area
}.
 Since $C_{\sigma*2}$ does not contain any part of $C_\sigma$ that projects onto $I_{\sigma*1}$, 
 \[\text{area}(C_{\sigma*2})\le \frac{35}{36}\text{area}(C_\sigma).\]
 An analogous argument can be used to show that 
 \[\text{area}(C_{\sigma*1})\le \frac{35}{36}\text{area}(C_\sigma).\]

Obverse that for $|\tau|=m$,
 \[\text{area}(C_{\sigma*\tau})\le \left(\frac{35}{36}\right)^m\text{area}(C_\sigma).\]
If follows that 
\begin{align*}
    \frac{1}{2}\hat{\beta}(C_{\sigma*\tau})|C_{\sigma*\tau}|^2\le \text{area}(C_{\sigma*\tau})\le\left(\frac{35}{36}\right)^m\text{area}(C_\sigma)\le \left(\frac{35}{36}\right)^m 2\hat{\beta}(C_\sigma)|C_\sigma|^2.
\end{align*}
If $|C_{\sigma*\tau}|>\frac{1}{2}|C_\sigma|$, then 
\[\frac{1}{8}\hat{\beta}(C_{\sigma*\tau})|C_\sigma|^2\le 2\left(\frac{35}{36}\right)^m\hat{\beta}(C_\sigma)|C_\sigma|^2,\] so 
\[\hat{\beta}(C_{\sigma*\tau})\le 16\left(\frac{35}{36}\right)^m\hat{\beta}(C_\sigma)\le 16\left(\frac{35}{36}\right)^m.\]
However, for $\hat{\beta}(C_{\sigma*\tau})$ sufficiently small, it must be the case that 
\[|C_{\sigma*\tau*i}|< \frac34 |C_{\sigma*\tau}|.\]
To see this, note that, by Lemma \ref{lem: geometric properties of conves hulls containment} (C5),$C_{\sigma*\tau*i}$ is contained in a rectangle of width less than or equal to $\frac23 |C_{\sigma*\tau}|$ and height less than or equal to $2\hat\beta(C_{\sigma*\tau})|C_{\sigma*\tau}|$.  We have 
\[
|C_{\sigma*\tau*i}|\le\sqrt{\left(2\hat\beta(C_{\sigma*\tau})|C_{\sigma*\tau}|\right)^2 + \left(\frac23 |C_{\sigma*\tau}|\right)^2}=\sqrt{\left(2\hat\beta(C_{\sigma*\tau})\right)^2 + \left(\frac23\right)^2}\,|C_{\sigma*\tau}|.
\]  
Therefore, it is enough to choose an $m$ so that \[\sqrt{\left(2\hat\beta(C_{\sigma*\tau})\right)^2 + \left(\frac23\right)^2}\le \sqrt{4\left(16^2\left(\frac{35}{36}\right)^{2m}\right)+\frac{4}{9}}\le \frac{3}{4}.\] Accordingly, we set $m_0=\log\left(\frac{1}{1024}\sqrt{\frac{17}{576}}\right)/\log(35/36)$.
Since $\left(\frac{3}{4}\right)^3<\frac{1}{2}$, after $M=3(m_0+1)$ iterations of construction, it is necessarily the case that the diameter has decreased by a factor of $\frac12$.
\end{proof}

\begin{cor}
\label{cor: convex hulls per point} Fix a dyadic cube $Q$. 
    For any $x\in E\cap Q$, there are at most $M$ convex hulls $C$ such that $C\in\mathcal{C}(Q)$ and  $x\in C$.
\end{cor}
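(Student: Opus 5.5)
The proof will use only the nesting of the construction and Lemma \ref{lem: diameter_decrease}. I will take $\mathcal{C}(Q)$ to mean the family of hulls $C\in\mathscr{C}$ that meet $Q$ and whose diameter is comparable to $|Q|$ within a factor of two, say $\frac12|Q|<|C|\le|Q|$. The definition has not yet been given at this point of the paper, so this is an assumption; if the actual window has a different fixed ratio, the same argument applies with $M$ replaced by a fixed multiple of $M$.

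\emph{Step 1: the hulls containing $x$ form a chain.} Fix $x\in E\cap Q$. By (C2), for each generation $k$ at most one $C_\sigma$ with $|\sigma|=k$ contains $x$. By (C1), exactly one does, namely $C^{(k)}$. Each child is built as the convex hull of a subset of $E\cap C_\sigma$, so $C_{\sigma*i}\subset C_\sigma$. Together with uniqueness at each generation, this forces $C^{(k+1)}$ to be a child of $C^{(k)}$. Hence the hulls containing $x$ form a single nested sequence
\[
C^{(0)}\supset C^{(1)}\supset C^{(2)}\supset\cdots,
\]
and in particular $|C^{(k)}|$ is nonincreasing in $k$.

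\emph{Step 2: the diameter window selects a block of consecutive generations.} Let $j$ be the smallest generation with $C^{(j)}\in\mathcal{C}(Q)$; if there is none, there is nothing to prove. Then $|C^{(j)}|\le|Q|$. By Lemma \ref{lem: diameter_decrease}, applied with $C_\sigma=C^{(j)}$, we get
\[
|C^{(j+M)}|\le\tfrac12|C^{(j)}|\le\tfrac12|Q|.
\]
Monotonicity of diameters then gives $|C^{(k)}|\le\frac12|Q|$ for every $k\ge j+M$, so none of these hulls lies in $\mathcal{C}(Q)$. Every hull in $\mathcal{C}(Q)$ containing $x$ is therefore among $C^{(j)},\dots,C^{(j+M-1)}$, which gives at most $M$ of them.

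\emph{Main obstacle.} The only delicate point is Step 1, which needs genuine disjointness and nesting of the hulls within a generation. In case (P1), the sets $C_{\sigma*0}$ and $C_{\sigma*1}$ both contain the fibre $\pi_\sigma^{-1}(z_\sigma)\cap E$, so (C2) should be read with the half-open convention implicit in the construction: assign the fibre over $z_\sigma$ to one side. Alternatively, if two hulls of the same generation can share $x$, I would allow at most two branches per generation. The chain argument then still works with a bounded multiplicative loss, giving at most $2M$ hulls, and that constant can be absorbed into $M$ by redefining it.
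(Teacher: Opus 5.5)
Your Steps 1 and 2 are exactly the paper's proof, which consists only of citing (C1), (C2) and Lemma \ref{lem: diameter_decrease}. The window the paper defines later is $\side(Q)/2<|C|\le\side(Q)$. Its ratio is again $2$, so Step 2 gives the bound $M$ verbatim, with no loss.

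Your caveat about (P1) is correct, and it applies equally to the paper. Since $z_\sigma\in\pi_\sigma(E\cap C_\sigma)$, one gets $C_{\sigma*0}\cap C_{\sigma*1}=\conv\bigl(E\cap C_\sigma\cap\pi_\sigma^{-1}(z_\sigma)\bigr)\neq\varnothing$. So (C2) is false as written, and Step 1 is unjustified for the literal construction.

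Your fallback, however, does not work: more than two hulls of a given generation can contain $x$. Take $E=\{(\pm1,0),(0,\pm1),(0,0)\}$ and $x=(0,0)$.
\begin{enumerate}
\item[(i)] For the horizontal diameter of $\conv(E)$, the only projected value in the open middle third is $0$. So $z_\varnothing=0$ is forced and $x\in C_0\cap C_1$.
\item[(ii)] Each $C_i$ has the unique diameter $[(0,-1),(0,1)]$, and again $z=0$ is forced. Hence $x$ lies in all four hulls $\conv\{(s,0),(0,t),(0,0)\}$, $s,t\in\{\pm1\}$, of generation $2$.
\item[(iii)] In each of these triangles, $x$ projects to the midpoint of the hypotenuse, so one more step puts $x$ in all eight hulls of generation $3$.
\end{enumerate}
So the hulls containing $x$ form a binary tree rather than a chain. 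The bound of $M$ along each branch then only yields a bound of order $2^M$, not $2M$. (The example does not contradict the corollary, since $M$ is huge; it only breaks your count.)

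Two repairs are available.
\begin{enumerate}
\item[(a)] Restore genuine disjointness, e.g.\ with your half-open convention. Note that this modifies the construction rather than reading it. It also costs closedness of $C_{\sigma*1}$, so the existence of diameter segments and shortest bridges would need attention elsewhere.
\item[(b)] Keep the construction and observe that distinct hulls of the same generation meet only on the countably many splitting lines $\pi_\sigma^{-1}(z_\sigma)$. Your chain argument then shows that every point off these lines lies in at most $M$ hulls of $\mathscr{C}(Q)$. This almost-everywhere bound is all Lemma \ref{lem:BP1054} needs for its area count. It does not, however, cover the points of $E$ lying on splitting lines, and those are exactly the problematic points in the statement as written.
\end{enumerate}
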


\begin{proof}
    This result follows immediately as a consequence of (C1), (C2), and Lemma \ref{lem: diameter_decrease}.
\end{proof}

\subsection{Labeling convex hulls as good and bad}
To construct a curve $\Gamma^*$ we need to identify the convex hulls which are sufficiently flat (in terms of $r$). We do so by relating convex hulls to dyadic cubes, and looking at the relationship between $r_E(Q)$ and $r$.
     \begin{definition}
      For each dyadic cube $Q$, we say that a convex hull $C$ is associated to $Q$ if $C\cap Q\ne\varnothing$ and $\side(Q)/2 < |C| \leq \side(Q)$. For any cube $Q \in \mathcal{D}$ we call $\mathscr{C}(Q)$ the collection of convex hulls associated to $Q$.
 \end{definition}
   \begin{definition}[good and bad cube]
        We say that a dyadic cube $Q \in \mathcal{D}$ and write $Q \in \mathcal{D}_\text{good}$ is a good cube if $r_E(3Q) < 2r$. We say that $Q$ is bad and write $Q \in \mathcal{D}_\text{bad}$ if it is not good.
          Let $\mathscr{G}= \bigcup_{Q \in \mathcal{D}_\text{good}} \mathscr{C}(Q)$ and $\mathscr{B}= \bigcup_{Q \in \mathcal{D}_\text{bad}} \mathscr{C}(Q)$.
        \end{definition}  
\begin{definition}[good and bad convex hull]
    We say that a convex hull $C$ is good if there exists a good cube $Q$ such that $C \in \mathscr{C}(Q)$. If $C$ is not good, we say that $C$ is bad.
\end{definition} 

For a good convex hull $C_\sigma$, fix one good cube $Q$ with $C_\sigma\in \mathscr{C}(Q)$.  Let $L_\sigma=C_\sigma\cap \ell_\sigma$ where $\ell_\sigma$ is a line satisfying $\sup_{y\in E\cap 3Q}\dist(y, \ell_\sigma)\le 2r$, which is guaranteed to exist by the fact that $\beta_E(3Q)<2r$.

    \subsection{Building candidate curves} \label{subsect: curve}

    We iteratively construct finitely many subsets $\Gamma_0, \dots,\Gamma_N$ of $\mathbb{R}^2$, each of which is a union of line segment and convex hulls.  After construction terminates, we set $\Gamma^* = \Gamma_N$, where $\Gamma_N$ is only a union of finitely many line segments.  We show that $\Gamma^*$ is a candidate belonging to the class of minimizers for $E$.
    
    \subsubsection{Initialization and first generation construction}

    If $C_\varnothing$ is good, let $\mathscr{G}_0 = \{C_\varnothing\}$, $\mathscr{B}_0 = \varnothing$, and let $\Gamma_0 = \partial C_\varnothing \cup L_\varnothing$. In this case, we set $\Gamma^*=\Gamma_0$, and the construction of a candidate curve is complete.
    
    If $C_\varnothing$ is bad, let $\mathscr{G}_0 = \varnothing$ and $\mathscr{B}_0 = \{C_\varnothing\}$ and let $\Gamma_0 = C_\varnothing$.
 
To construct $\Gamma_1$, we look at $C_0,C_1\subset C_\varnothing$. We define $\mathscr{G}_1=\{C_{\varnothing*i}|C_{\varnothing*i}\text{ is good}\}$ and $\mathscr{B}_1=\{C_{\varnothing*i}|C_{\varnothing*i}\text{ is bad}\}$. Then we define
   \[
    \Gamma_1 =\left(\bigcup_{C_\tau \in \mathscr{G}_1}\partial C_\tau \cup L_\tau \right)\cup \left(\bigcup_{C_\sigma \in \mathscr{B}_1} C_\sigma\right)\cup B_{\varnothing}.
     \]
\subsubsection{Inductive Construction}
Let $j \geq 1$. Suppose that $\mathscr{G}_j$ and $\mathscr{B}_j$ have been established and $\Gamma_j$ defined by 
     \[
    \Gamma_j = \left(\bigcup_{k=1}^j\bigcup_{C_\tau \in \mathscr{G}_k}\partial C_\tau \cup L_\tau\right)\cup \left(\bigcup_{C_\sigma \in \mathscr{B}_j} C_\sigma\right)\cup \left(\bigcup_{k=0}^{j-1}\bigcup_{\tau : C_\tau \in \mathscr{B}_k}B_{\tau}\right).
    \]
    For each $C_\sigma \in \mathscr{B}_j$, we define $\mathscr{G}_{j+1} = \left\{C_{\sigma \ast i}\mid C_{\sigma \ast i} \text{ is good}\right\}$, and $\mathscr{B}_{j+1} = \left\{C_{\sigma \ast i}\mid C_{\sigma \ast i} \text{ is bad}\right\}$.
        Intuitively, $\mathscr{G}_{j+1}$ (resp.~$\mathscr{B}_{j+1}$) contains the ``good (resp.~bad) convex children of bad convex parents.''
        Note that we do not continue the construction inside the convex hulls in $\mathscr{G}_j$, so children of $C_\sigma\in\mathscr{G}_j$ are contained in neither $\mathscr{G}_{j+1}$ nor $\mathscr{B}_{j+1}$.
 
    We define 
     \begin{align*}
    \Gamma_{j+1} & \! = \! \left(\bigcup_{k=1}^j\bigcup_{C_\tau \in \mathscr{G}_k}\! \! \partial C_\tau \cup L_\tau\right) \! \cup \! \left(\bigcup_{C_\tau \in \mathscr{G}_{j+1}} \! \!\partial C_\tau \cup L_\tau\right) \! \cup \! \left(\bigcup_{C_\sigma \in \mathscr{B}_{j+1}} C_\sigma\right)\! \cup \!\left(\bigcup_{k=0}^{j-1}\bigcup_{\tau : C_\tau \in \mathscr{B}_k}B_{\tau}\right) \! \cup \! \!\bigcup_{C_\tau \in \mathscr{B}_{j}}B_\tau\\
    & = \left(\bigcup_{k=1}^{j+1}\bigcup_{C_\tau \in \mathscr{G}_k}\partial C_\tau \cup L_\tau\right)\cup \left(\bigcup_{C_\sigma \in \mathscr{B}_{j+1}} C_\sigma\right)\cup \left(\bigcup_{k=0}^{j}\bigcup_{\tau : C_\tau \in \mathscr{B}_k}B_{\tau}\right).
    \end{align*}
        \begin{figure}[htbp]
\centering
     \begin{tikzpicture}[scale=0.92]
  \draw[-] (0,0) -- (5,-1) -- (8,1) -- (5,2) -- (-0.5,2.5) -- (-0.5,1) -- cycle;
  \draw[color=purple] (0,0) -- (0.2,1.7) -- (-0.5,2.5) -- (-0.5,1) -- cycle;
  \draw[color=purple] (-0.5,2.5) -- (0,0);
  \draw[color=purple, fill=purple!20] (5,-1) -- (8,1) -- (5,2) -- cycle;
  \draw[color=purple] (0.2,1.7) -- (5,-1);
\end{tikzpicture}
\qquad
    \begin{tikzpicture}[scale=0.92]
  \draw[-] (0,0) -- (5,-1) -- (8,1) -- (5,2) -- (-0.5,2.5) -- (-0.5,1) -- cycle;
  \draw[color=purple] (0,0) -- (0.2,1.7) -- (-0.5,2.5) -- (-0.5,1) -- cycle;
 \draw[color=purple] (-0.5,2.5) -- (0,0);
  \draw[color=purple] (5,-1) -- (5.2,1) -- (5,2) -- cycle;
  \draw[color=purple, fill=purple] (5,2) circle (1pt);
  \draw[color=purple] (0.2,1.7) -- (5,-1);
  \draw[color=purple, fill=purple!20] (5,2) -- (6.5,0.2) -- (8,1) -- cycle;
\end{tikzpicture}
\caption{\label{figure: curve construction} Two steps of the construction of the $\Gamma_j$'s (shown in purple).}
\end{figure}
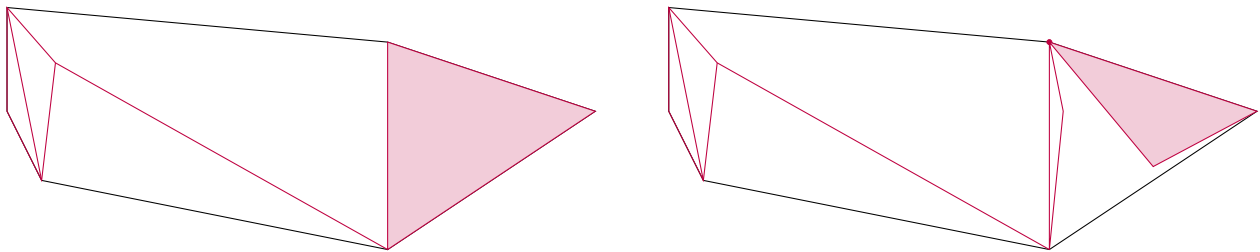

    \subsubsection{Termination of construction}
By Lemma \ref{lem: diameter_decrease}, after finitely many, $N$, (where $N$ is bounded above in terms of $r$) iterations of the construction, $|C_\sigma|\le\frac{r}{3}$ for all convex hulls $C_\sigma$, $\sigma\in\bigcup_{i=1}^\infty \{0,1\}^i$. Then there is $Q$ such that $C_\sigma\cap Q\ne\varnothing$ and
    \[\side(Q)/2\le |C_\sigma|< \side(Q).\]
   Since $\side(Q)\le\frac{r}{3}$, it is necessarily the case that $r_E(3Q)\le r$ because $r_E(3Q)$ is bounded by $\side(3Q)$.  
   Therefore $\mathscr{B}_N = \varnothing$. Then we define
   \[
   \Gamma^* = \Gamma_N = \left(\bigcup_{k=1}^N\bigcup_{C_\tau \in \mathscr{G}_k}\partial C_\tau \cup L_\tau\right)\cup  \left(\bigcup_{k=0}^{N-1}\bigcup_{\tau : C_\tau \in \mathscr{B}_k}B_{\tau}\right).
   \]
  
\subsection{Verifying that \texorpdfstring{$\Gamma^\ast$}\ \ is a candidate curve.}
The class of objects that MDP minimizes on is curves of finite length whose $r$-neighborhood covers $E$. 
By construction, $\Gamma^*$ has finite length, as it is the union of finitely many line segments. To show that it is in fact a curve, we must show it is connected.

\begin{lem}\label{rem: connected and finite length}
$\Gamma^*$ is connected. 
\end{lem}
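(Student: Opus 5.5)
We prove connectedness by induction on the generation. The claim we aim for is the following: for every $j\le N$, the set $\Gamma_j$ is connected, and every piece in it (each $\partial C_\tau\cup L_\tau$ with $C_\tau\in\mathscr{G}_k$, each bridge $B_\tau$, each bad hull $C_\sigma\in\mathscr{B}_j$) contains a point of $E$. Since $\Gamma^*=\Gamma_N$, the lemma follows once the induction is complete.

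\textbf{Base case.} If $C_\varnothing$ is bad, then $\Gamma_0=C_\varnothing$ is convex, hence connected. If $C_\varnothing$ is good, then $\Gamma_0=\partial C_\varnothing\cup L_\varnothing$. Here $\partial C_\varnothing$ is connected, being the boundary of a compact convex set: a closed convex curve, a segment, or a point. Also $L_\varnothing=C_\varnothing\cap\ell_\varnothing$ is a segment with endpoints on $\partial C_\varnothing$. It is nonempty because every point of $E$ lies within $2r$ of $\ell_\varnothing$, and we may first translate $\ell_\varnothing$ toward $E$ so that it meets $C_\varnothing$. So $\Gamma_0$ is connected. More generally, every good piece $\partial C_\tau\cup L_\tau$ is connected, and it contains $E\cap\partial C_\tau\neq\varnothing$, because $C_\tau$ is the convex hull of a compact nonempty subset of $E$ and its extreme points lie in $E$.

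\textbf{Inductive step.} We pass from $\Gamma_j$ to $\Gamma_{j+1}$ by replacing each bad $C_\sigma\in\mathscr{B}_j$ with
\[
P_\sigma:=X_{\sigma*0}\cup B_\sigma\cup X_{\sigma*1},
\]
where $X_{\sigma*i}$ is $\partial C_{\sigma*i}\cup L_{\sigma*i}$ if the child is good and $C_{\sigma*i}$ if it is bad. Each $X_{\sigma*i}$ is connected, and $B_\sigma$ joins $e_{\sigma*0}\in E\cap C_{\sigma*0}$ to $e_{\sigma*1}\in E\cap C_{\sigma*1}$. There are two things to check.

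\emph{(a) $P_\sigma$ is connected.} The key point is to choose $e_{\sigma*i}$ on $\partial C_{\sigma*i}$ whenever the child is good. This is possible because a closest pair of points between two disjoint compact convex sets can be taken on their boundaries. If the nearest pair of points of $E$ is interior, we instead use the closest pair of points of the hulls, which are extreme points and hence in $E$, with the same distance. So $B_\sigma$ meets both $X_{\sigma*0}$ and $X_{\sigma*1}$, and $P_\sigma$ is connected.

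\emph{(b) $P_\sigma$ is attached to the rest of $\Gamma_j$ wherever $C_\sigma$ was.} Every other piece of $\Gamma_j$ touching $C_\sigma$ is the bridge $B_{\sigma'}$ from its parent, where $\sigma=\sigma'*i$. It touches $C_\sigma$ exactly at its endpoint $e_\sigma\in E\cap\partial C_\sigma$. By (C1) this point lies in one of the children, so it lies in $C_{\sigma*0}\cup C_{\sigma*1}$. If that child is bad, it is retained whole. If it is good, we need $e_\sigma\in\partial C_{\sigma*i}$. This holds because $e_\sigma$ is an extreme point of $C_\sigma$ and $C_{\sigma*i}\subset C_\sigma$, so $e_\sigma$ cannot lie in the interior of $C_{\sigma*i}$.

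With (a) and (b), replacing each connected block $C_\sigma$ by the connected set $P_\sigma$, which still contains every attachment point, preserves connectedness of the union. This completes the induction, so $\Gamma^*=\Gamma_N$ is connected.

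\textbf{Main obstacle.} The delicate part is the endpoint-on-boundary issue in (a) and (b). The construction as written picks the bridge points in $E\cap C_{\sigma*i}$, so I would either reinterpret $B_\sigma$ through the extreme-point argument above or note that this choice is without loss of generality. A secondary point is the degenerate hulls (a point or a segment), where $\partial C=C$; there the same argument works verbatim.
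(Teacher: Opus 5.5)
Your overall scheme is the paper's: replace each bad $C_\sigma$ by $X_{\sigma*0}\cup B_\sigma\cup X_{\sigma*1}$, then check that every bridge stays attached. However, step (a), on which (b) also rests, uses a false claim. Closest points of two disjoint hulls need not be extreme points, and the hull-to-hull distance can be strictly smaller than the distance between the $E$-points. So the shortest $E$-bridge cannot in general be replaced by a boundary-to-boundary $E$-bridge ``with the same distance.''

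Here is a counterexample. Let $a=(0,0)$, $b=(1,0)$, $e_0=(0.31,0)$, $p_\pm=(0.32,\pm 0.1)$, $e_1=(2/3,0)$, and $E=\{a,b,e_0,p_+,p_-,e_1\}$.
\begin{itemize}
\item $[a,b]$ is the unique diameter and no point of $E$ projects into $K=(1/3,2/3)$, so case (P2) occurs.
\item $C_0$ is the triangle with vertices $a,p_+,p_-$, with $e_0$ in its interior, and $C_1=[e_1,b]$.
\item The unique shortest bridge is $[e_0,e_1]$, since $|p_\pm-e_1|>2/3-0.31=|e_0-e_1|$.
\item The hulls are at distance $2/3-0.32$, attained at $(0.32,0)\notin E$.
\end{itemize}
So your reinterpretation of $B_\sigma$ is not without loss of generality. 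It alters the construction and cannot keep both minimality and endpoints in $E$, and you need the latter in (b). For the same reason, in (b) $e_\sigma$ need not be an extreme point of $C_\sigma$, and $B_{\sigma'}$ may enter $C_\sigma$ rather than touch it only at $e_\sigma$.

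The missing observation is that no boundary choice is needed: a segment with one endpoint in a closed set $C$ and the other outside $C$ meets $\partial C$.
\begin{itemize}
\item If a bridge $B_\rho$ comes from case (P2), its endpoints lie in the disjoint hulls $C_{\rho*0}$ and $C_{\rho*1}$. So if one endpoint lies in a good descendant $C_{\sigma*i}$ of, say, $C_{\rho*0}$, the other lies outside it, and $B_\rho$ meets $\partial C_{\sigma*i}\subset X_{\sigma*i}$.
\item If $B_\rho$ comes from case (P1), it is a single point of $E$ on the line $\pi_\rho^{-1}(z_\rho)$. Every descendant of $C_{\rho*0}$ or $C_{\rho*1}$ lies in a closed half-plane bounded by that line, so this point lies on $\partial C_{\sigma*i}$.
\end{itemize}
This is exactly the assertion in the paper's proof that the bridge ``will intersect $\partial C_{\sigma*0}$.''

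You must also apply this to every bridge with an endpoint in $C_\sigma$, not only to the parent bridge $B_{\sigma'}$. Endpoints of bridges from earlier ancestors also land in $C_\sigma$: the endpoint of $B_\varnothing$ in $C_0$ eventually sits in some $C_{0*i*j}$, whose parent bridge is $B_{0*i}$. Each such endpoint lies in $C_{\sigma*0}\cup C_{\sigma*1}$, since $E\cap C_\sigma\subset C_{\sigma*0}\cup C_{\sigma*1}$.

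With this crossing argument in place of the extreme-point convention, your induction goes through. Translating $\ell_\varnothing$ in the base case is likewise unnecessary, since $\partial C_\varnothing$ is already connected.
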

\begin{proof}
To see that $\Gamma^*$ is connected, suppose that $C_\sigma\in \mathscr{B}_j$ and $B_{\tau}$ is a bridge with an endpoint $e_\tau$ in $E\cap C_\sigma$. To construct $\Gamma_{j+1}$ from $\Gamma_{j}$, we first remove $C_\sigma$ and replace it with $C_{\sigma*0}\cup B_\sigma\cup C_{\sigma*1}$, so that $C_{\sigma*0}$ is connected to $C_{\sigma*1}$ via $B_\sigma$.  
Because $ e_\tau\in C_{\sigma*0}\cup C_{\sigma*1}$, we get that $\left(\Gamma_j\setminus C_\sigma\right) \cup C_{\sigma*0}\cup B_\sigma\cup C_{\sigma*1}$ is connected. 

If both children of $C_\sigma$ are bad, then we move on to the next bad convex hull.
If a child, say $C_{\sigma\ast 0}$, is good, then we remove $C_{\sigma\ast 0}$ and replace it with $\partial C_{\sigma\ast 0} \cup L_{\sigma\ast 0}$.
For any bridge $B_\tau$ with an endpoint $e_\tau \in E\cap C_{\sigma\ast 0}$, the bridge $B_\tau$ will intersect $\partial C_{\sigma\ast 0}$. 
Thus, since $\partial C_{\sigma\ast 0} \cup L_{\sigma\ast 0}$ is connected, the replacement $(\Gamma_j\setminus C_{\sigma*\ast 0}) \cup (\partial C_{\sigma\ast 0} \cup L_{\sigma\ast 0})$ is also connected.
\end{proof}

   \begin{lem}\label{lem: ab}
     Let $C$ be a convex set in $\mathbb{R}^2$ and let $\ell$ be a line in $\mathbb{R}^2$.
     \begin{enumerate}
     \item[(a)] If $\sup_{x \in C}\dist(x, \ell) \leq r$ then $\mathcal{N}(\partial C, r) \supset C$.
     \item[(b)] If $\sup_{x \in C}\dist(x, \ell) \leq 2r$ then $\mathcal{N}(\partial C \cup (C\cap \ell), r) \supset C$.
     \end{enumerate}
     \end{lem}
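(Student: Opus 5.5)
Both parts reduce to a statement about the slab $S=\{x:\dist(x,\ell)\le r\}$ (resp.\ $2r$) that contains $C$. For each point $x\in C$ we produce a point of $\partial C$ (or of $C\cap\ell$) within distance $r$ of $x$, by moving from $x$ perpendicular to $\ell$. Throughout, we assume $C$ is closed, which holds for the convex hulls of compact sets to which the lemma is applied. If $C$ has empty interior, then $\partial C=C$ and both statements are trivial, so we may assume $C$ has nonempty interior.

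\emph{Part (a).} Fix $x\in C$ and let $v$ be a unit vector orthogonal to $\ell$. Consider the line $x+\mathbb{R}v$. Its intersection with $C$ is a compact segment $[p,q]$ containing $x$, with $p,q\in\partial C$ (the endpoints of a chord of a closed convex set lie on its boundary). Parametrize the signed distance to $\ell$ along this line. Since every point of $C$ lies in the slab $\{y:\dist(y,\ell)\le r\}$, the chord $[p,q]$ lies in a segment of length $2r$ orthogonal to $\ell$. Hence $|p-q|\le 2r$, and the nearer endpoint to $x$ is within distance $\min\{|x-p|,|x-q|\}\le |p-q|/2\le r$. Therefore $x\in \mathcal{N}(\partial C,r)$.

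\emph{Part (b).} Again take the chord $[p,q]=C\cap(x+\mathbb{R}v)$ through $x$ orthogonal to $\ell$. It now lies in a slab of total width $4r$, and $\ell$ is the central line of that slab. We split into two cases.
\begin{itemize}
\item[(i)] If the chord crosses $\ell$, let $m$ be the crossing point, so $m\in C\cap\ell$. Then $x$ lies in either $[p,m]$ or $[m,q]$, each of which has length at most $2r$. The point $x$ is within $r$ of one of the two endpoints of whichever subsegment contains it. Both candidate endpoints lie in $\partial C\cup(C\cap\ell)$.
\item[(ii)] If the chord does not meet $\ell$, it lies entirely on one side of $\ell$, in a half-slab of width $2r$. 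Then $|p-q|\le 2r$, and we conclude exactly as in (a).
\end{itemize}
Either way, $x\in\mathcal{N}(\partial C\cup(C\cap\ell),r)$.

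The only delicate point is justifying that the endpoints of the chord belong to $\partial C$ and that the chord is nondegenerate or at least contains $x$. For a closed convex set this is standard. If $x\in\partial C$ already, the claim is immediate.
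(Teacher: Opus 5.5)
Your proof is correct and is essentially the paper's argument. In (a), both proofs use the chord of $C$ through the point perpendicular to $\ell$: the paper splits it at $\ell$ into pieces of length at most $r$ and uses the outer endpoint, while you take the nearer endpoint of a chord of length at most $2r$. In (b), the paper applies (a) to the halves $C\cap H^{\pm}$ (with $H^{\pm}$ the closed half-planes bounded by $\ell$) using the lines parallel to $\ell$ at distance $r$; that is precisely your case split at the crossing point $m$, phrased for sets rather than chords. Your closedness assumption is also used implicitly by the paper and holds in its application.
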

     \begin{proof}
     \begin{enumerate}
     \item[(a)] Let $\pi$ be the orthogonal projection of $\mathbb{R}^2$ onto $\ell$. 
     For any $x \in \ell$, let $\ell^\perp_x \cap C$ by the line perpendicular to $\ell$ containing $x$.
     By assumption $\sup_{y \in \ell_x^\perp\cap C} |x - y| \leq r$. 
     If $\ell_x^\perp \cap C\cap H^{\pm}$ is not empty, then $(\ell_x^\perp \cap \partial C)\cap H^{\pm}$ is also not empty. 
     Then for any point $b_x^{\pm} \in (\ell_x^\perp \cap \partial C)\cap H^{\pm}$, we have that the ball $B(b_x^{\pm}, r)$ contains $(\ell_x^\perp \cap \partial C)\cap H^{\pm}$. 
     Therefore
     \[
     \mathcal{N}(\partial C, r) = \bigcup_{b \in \partial C} B(b, r) \supset \bigcup_{x \in \ell} B(b_x^- \cup b_x^{+}, r) \supset \bigcup_{x \in \ell}\ell_x^\perp \cap C = C.
     \]
    
     \item[(b)] First, let $H^+$ and $H^-$ be the two half-spaces in $\mathbb{R}^2$ with $\partial H^{\pm} = \ell$, let $\boldsymbol{n}$ be the unit normal vector to $\ell$ pointing in the direction of $H^+$, and let $C^\pm = C\cap H^\pm$. 

     Then, the $r$-neighborhood $\mathcal{N}(\ell + r\boldsymbol{n}, r)$ contains $C^+$, and similarly, the $r$-neighborhood $\mathcal{N}(\ell - r\boldsymbol{n}, r)$ contains $C^-$.
     Since $C^\pm$ are convex sets, part (a) implies  $\mathcal{N}(\partial C^\pm, r) \supset C^\pm$.
     Thus
     \[
     \mathcal{N}(\partial C^+ \cup \partial C^-, r) = \mathcal{N}(\partial C^+, r) \cup \mathcal{N}(\partial C^- ,r) \supset C^+ \cup C^- = C.
     \]
     Since $\partial C^+ \cup \partial C^- = \partial C \cup (C\cap \ell)$, we are done. \qedhere
     \end{enumerate}
     \end{proof}

\begin{prop}
    $\Gamma^*$ is a candidate curve for MDP.
\end{prop}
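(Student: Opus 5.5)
Being a candidate curve means three things: $\Gamma^*$ is connected, it has finite $\mathcal{H}^1$ measure, and $\mathcal{N}(\Gamma^*, r) \supset E$. Connectedness is Lemma \ref{rem: connected and finite length}. Finite length holds because $\Gamma^*$ is a finite union of line segments; here we use that $N$ is finite by Lemma \ref{lem: diameter_decrease}, and that each good hull's boundary $\partial C_\tau$ is a convex curve of length at most $\pi |C_\tau|$. So the substance of the proof is the covering property.

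For the covering, fix $x \in E$. By (C1) and (C2), for each generation $k$ there is exactly one $\sigma$ with $|\sigma| = k$ and $x \in C_\sigma$. Following these nested hulls, the chain of ancestors of $x$ starts at $C_\varnothing$. We claim it must pass through some hull in $\mathscr{G}_k$ with $k \le N$. Indeed, a hull in $\mathscr{B}_j$ has both of its children placed in $\mathscr{G}_{j+1} \cup \mathscr{B}_{j+1}$. Since $\mathscr{B}_N = \varnothing$ (shown in the termination step), the descent along $x$'s chain cannot stay bad forever. Hence there is a good hull $C_\tau \in \mathscr{G}_k$ with $x \in C_\tau$, and $\partial C_\tau \cup L_\tau \subset \Gamma^*$. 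This also covers the trivial case where $C_\varnothing$ is itself good.

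It remains to apply Lemma \ref{lem: ab}(b) to $C = C_\tau$ and $\ell = \ell_\tau$, which gives $x \in C_\tau \subset \mathcal{N}(\partial C_\tau \cup L_\tau, r) \subset \mathcal{N}(\Gamma^*, r)$. The hypothesis to verify is $\sup_{y \in C_\tau} \dist(y, \ell_\tau) \le 2r$. This is the main step, but it is short.
\begin{itemize}
\item[(i)] $C_\tau$ is associated to a good cube $Q$, so $C_\tau \cap Q \ne \varnothing$ and $|C_\tau| \le \side(Q)$. Therefore $C_\tau \subset 3Q$, the cube with the same center as $Q$ and three times its side.
\item[(ii)] Every $C_\tau$ is the convex hull of a subset of $E$; more precisely, the extreme points of $C_\tau$ lie in $E \cap C_\tau$ (the hulls are of compact pieces of $E$). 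Combined with (i), $C_\tau = \conv(E \cap C_\tau)$ with $E \cap C_\tau \subset E \cap 3Q$.
\item[(iii)] By the choice of $\ell_\tau$, $\dist(y, \ell_\tau) \le 2r$ for all $y \in E \cap 3Q$.
\item[(iv)] The set $\{z : \dist(z, \ell_\tau) \le 2r\}$ is a convex strip containing $E \cap C_\tau$, so it contains $\conv(E \cap C_\tau) = C_\tau$.
\end{itemize}

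One technical point: the defining inequality is strict, $r_E(3Q) < 2r$, and the infimum over lines need not be attained. A line with sup-distance at most $2r$ still exists, by compactness of the set of relevant lines, or simply by taking a line achieving a value below $2r$. Closedness of the hulls (the pieces $\pi^{-1}(\cdot) \cap E$ are compact) ensures that $\conv$ of a compact set is compact, so the boundary arguments in Lemma \ref{lem: ab} apply.
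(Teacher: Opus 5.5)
Your proof is correct and follows essentially the same route as the paper's. Both reduce to the covering property via Lemma~\ref{rem: connected and finite length}, take a hull $C_\tau\in\mathscr{G}_k$ containing $x$, show $C_\tau=\conv(E\cap C_\tau)\subset\conv(E\cap 3Q)$ lies in the closed $2r$-strip about $\ell_\tau$, and conclude with Lemma~\ref{lem: ab}(b).

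Your extra details fill steps the paper leaves implicit: the descent argument via $\mathscr{B}_N=\varnothing$, and the bound $\mathcal{H}^1(\partial C_\tau)\le\pi|C_\tau|$, which is the right justification of finite length since hull boundaries need not be polygonal. One correction: you only need \emph{existence} of a nested chain of hulls containing $x$, not uniqueness, and uniqueness in fact fails in case (P1), where the points of $E$ lying over $z_\sigma$ belong to both children.
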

\begin{proof} 
By Lemma \ref{rem: connected and finite length}, it suffices to show that $\mathcal{N}(\Gamma^*,r) \supset E$.  To this end, let $x\in E$.  
By construction, for each $k\in\mathbb{N}$, there is a $\tau_k$ such that $|\tau_k|=k$ and $x\in C_{\tau_k}$. 
Furthermore, there exists some $k_x$ such that $C_{\tau_{k_x}}\in \mathscr{G}_{{k_x}}$.  Recall that $C_{\tau_{k_x}}\in \mathscr{G}_{\tau_{k_x}}$ implies $C_{\tau_{k_x}}\subset 3Q$ for some $Q$ with $r_E(3Q)<2r$. 
Since $E\cap 3Q\subset \ell_{\tau_{k_x}}\times (-2r,2r)$, it is also the case that $C_{\tau_{k_x}} \subset \ell_{\tau_{k_x}}\times (-2r,2r)$.  
By Lemma \ref{lem: ab}, we have $\mathcal{N}(\partial C_{\tau_{k_x}} \cup L_{\tau_{k_x}},r)\supset C_{\tau_{k_x}}$, and it follows that $x\in \mathcal{N}(C_{\tau_{k_x}} \cup L_{\tau_{k_x}},r)$.   
Note that 
\begin{align*}
\mathcal{N}(\Gamma^*, r)&=\mathcal{N}\left(\left(\bigcup_{k=1}^N\bigcup_{C_\tau \in \mathscr{G}_k}\partial C_\tau \cup L_\tau\right)\cup  \left(\bigcup_{k=0}^{N-1}\bigcup_{\tau : C_\tau \in \mathscr{B}_k}B_{\tau}\right), r\right)\supset \mathcal{N}(C_{\tau_{k_x}} \cup L_{\tau_{k_x}}, r).
\end{align*}
Thus $x\in \mathcal{N}(\Gamma^*, r)$.  
Since $x$ was chosen arbitrarily, we conclude that $E\subset \mathcal{N}(\Gamma^*, r)$.
\end{proof}

\subsection{Estimating Length of the Candidate}   
Finally we show that the curve $\Gamma^*$ has length bounded above by the left hand side of Theorem \ref{thm: Main}.
\begin{lem}[{\cite[Lemma 10.5.2]{BP}}]
\label{lem: bounding new diameters with old diamters}
Suppose that $C_{\sigma\ast 1}$ and $C_{\sigma\ast 2}$ are both bad children of $C_\sigma$. 
If $C_\sigma$ is replaced by $C_{\sigma*1}$, $B_\sigma$, and $C_{\sigma*2}$, then there exists $K>0$ such that
\[|C_{\sigma*0}|+\frac{1}{2}|B_\sigma|+|C_{\sigma*1}|\le |C_\sigma|+K\hat\beta^2(C_\sigma)|C_\sigma|.\]
\end{lem}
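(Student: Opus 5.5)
Write $D=|C_\sigma|$ and $\beta=\hat\beta(C_\sigma)$. I will work in coordinates where $\hat L_\sigma=[a_\sigma,b_\sigma]$ lies on the $x$-axis from $0$ to $D$. By (C4), $C_\sigma$, and hence both children, lie in the strip $|y|\le \beta D$. The plan is to project everything onto $\hat L_\sigma$ and control the vertical error by Pythagoras. For any set $A$ in the strip whose projection has length $p$, we have $|A|\le\sqrt{p^2+4\beta^2D^2}\le p+2\beta^2D^2/p$. So whenever $p$ is comparable to $D$, the diameter exceeds the projected length by at most $O(\beta^2 D)$.

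\textbf{Case (P1).} Here $B_\sigma$ is a single point, so the claim is $|C_{\sigma*0}|+|C_{\sigma*1}|\le D+K\beta^2D$.
\begin{itemize}
\item The projections of the children are $[0,z_\sigma]$ and $[z_\sigma,D]$, each of length at least $D/3$.
\item The estimate above gives $|C_{\sigma*i}|\le p_i+6\beta^2D$, where $p_0+p_1=D$.
\item Hence $K=12$ suffices in this case.
\end{itemize}
Boundedness $\beta\le 1$ is harmless here.

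\textbf{Case (P2).} The projections of $C_{\sigma*0}$ and $C_{\sigma*1}$ lie in $I_{\sigma*0}=[0,D/3]$ and $I_{\sigma*1}=[2D/3,D]$. Write them as $[0,s]$ and $[t,D]$ with $s\le D/3\le 2D/3\le t$; they contain $0$ and $D$ because $a_\sigma,b_\sigma\in E$.
\begin{itemize}
\item The bridge $B_\sigma$ is the shortest segment between points of $E$ in the two children, so $|B_\sigma|\le|e-e'|$ for any such pair.
\item Choose $e\in E\cap C_{\sigma*0}$ projecting to $s$ and $e'\in E\cap C_{\sigma*1}$ projecting to $t$. These exist because the projection of the convex hull equals the convex hull of the projection, and the extreme points of that interval are attained by $E$.
\item Then $t-s\ge D/3$, so $|B_\sigma|\le (t-s)+6\beta^2D$.
\end{itemize}

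For the children themselves the projected lengths $s$ and $D-t$ may be tiny, and the bound $p+2\beta^2D^2/p$ then degenerates. I will use instead the cruder bound $|C_{\sigma*i}|\le p_i+2\beta D$. This is where the factor $\frac12$ in front of $|B_\sigma|$ must enter, and it is the main obstacle: one needs
\[
|C_{\sigma*0}|+|C_{\sigma*1}|\le s+(D-t)+\tfrac12(t-s)+O(\beta^2 D),
\]
i.e.\ the slack $\frac12(t-s)\ge D/6$ must absorb the vertical contributions.

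\textbf{Sub-case $\beta\ge\beta_0$ for a small absolute $\beta_0$.} Everything is bounded by $CD\le (C/\beta_0^2)\beta^2 D$, and we are done.

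\textbf{Sub-case $\beta<\beta_0$.} Use $|C_{\sigma*i}|\le p_i+2\beta D$. The total excess over $s+(D-t)+\frac12|B_\sigma|$ is then at most $4\beta D$ minus the slack $\frac12(t-s)\ge D/6$. To turn this into an $O(\beta^2D)$ bound rather than $O(\beta D)$, I will not bound $|B_\sigma|$ trivially from above. Instead I use the full identity
\[
|C_{\sigma*0}|+|C_{\sigma*1}|\le s+(D-t)+4\beta D \le D-(t-s)+4\beta D\le D-\tfrac{D}{3}+4\beta D,
\]
while $\frac12|B_\sigma|\le \frac12(D+2\beta D)$. The sum is then at most $D-\frac D3+\frac D2+5\beta D=D+(5\beta-\frac16)D$, which is at most $D$ once $\beta\le 1/30$.

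Choosing $\beta_0=1/30$ and $K=\max(12,C\cdot 900)$ finishes the proof. I expect the delicate part to be only the bookkeeping in this (P2) split.
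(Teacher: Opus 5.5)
\textbf{Verdict.} Your proof follows the same route as the paper, but the last step of the (P2) small-$\beta$ subcase is wrong as written and needs the correction below.

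\textbf{What matches and what works.} Your plan is the paper's: the split into (P1) and (P2), the threshold $\beta_0=1/30$ in (P2), and letting the factor $\frac12$ on the bridge absorb the $O(\beta D)$ vertical errors of the children. Your (P1) case and the large-$\beta$ subcase are fine. In (P1), your bound $\sqrt{p^2+q^2}\le p+q^2/(2p)$ holds for every $p>0$, so you avoid the paper's split according to whether $2\hat\beta(C_\sigma)|C_\sigma|/|[a_\sigma,z_\sigma]|$ is below $1$.

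\textbf{The gap.} The final computation in the subcase $\beta<1/30$ of (P2) fails. Since $D-\frac D3+\frac D2=\frac76 D$, your bound is actually $D+(\frac16+5\beta)D$, not $D+(5\beta-\frac16)D$. That is not $\le D+K\beta^2D$ as $\beta\to0$, so this subcase is not proved. The sign slip hides a real flaw. You used $t-s\ge D/3$ to bound the children, but the trivial $|B_\sigma|\le D+2\beta D$ for the bridge. These two estimates on $t-s$ pull in opposite directions, so the cancellation you correctly identified earlier ("the slack $\frac12(t-s)$ must absorb the vertical contributions") is lost. Despite announcing otherwise, you do bound $|B_\sigma|$ trivially at that point.

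\textbf{The fix.} It uses only what you already derived. Keep $t-s$ as a variable and use $|B_\sigma|\le|e-e'|\le (t-s)+2\beta D$, or your sharper $(t-s)+6\beta^2D$. Then
\[
|C_{\sigma*0}|+|C_{\sigma*1}|+\tfrac12|B_\sigma|\le D-(t-s)+4\beta D+\tfrac12(t-s)+\beta D=D-\tfrac12(t-s)+5\beta D\le D-\tfrac D6+5\beta D\le D
\]
for $\beta\le 1/30$. This is exactly the paper's computation, where $|I_0|$ and $|I_1|$ must be read as the projected lengths of the children, i.e. your $s$ and $D-t$. With this correction your proof is complete.
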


\begin{proof}
\begin{enumerate}
  \item  First suppose that $C_{\sigma*0}$, $C_{\sigma*1}$, and $B_\sigma$ were constructed via Case 1, i.e., $|B_{\sigma}|=0$ and $|[a_\sigma, z_\sigma]|, |[z_\sigma, b_\sigma]|$ are both greater than or equal to that $\frac{|C_\sigma|}{3}$. 
    Then $C_{\sigma*0}$ is contained in the rectangle $[a_\sigma, z_\sigma]\times 2\hat\beta(C_\sigma)|C_\sigma|$.  The diameter of this rectangle is 
    \[
    \sqrt{|[a_\sigma, z_\sigma]|^2+(2\hat\beta(C_\sigma)|C_\sigma|)^2}.
    \]
    If  $ \frac{2\hat\beta(C_\sigma)|C_\sigma|}{|[a_\sigma, z_\sigma]|}  < 1$ we can use Taylor approximation to obtain
\begin{align*}
    \sqrt{|[a_\sigma, z_\sigma]|^2+(2\hat\beta(C_\sigma)|C_\sigma|)^2} & = |[a_\sigma, z_\sigma]|  \sqrt{1+\left(\frac{2\hat\beta(C_\sigma)|C_\sigma|}{|[a_\sigma, z_\sigma]|}\right)^2} \\
    & \leq |[a_\sigma, z_\sigma]| \left(1 + K_1\left(\frac{2\hat\beta(C_\sigma)|C_\sigma|}{|[a_\sigma, z_\sigma]|}\right)^2\right) \\
    & \leq |[a_\sigma, z_\sigma]| + 6^2 K_1\hat\beta(C_\sigma)^2|C_\sigma|.
\end{align*}

If $\frac{2\hat\beta(C_\sigma)|C_\sigma|}{|[a_\sigma, z_\sigma]|}>1$, then  $\hat{\beta}(C_\sigma) > \frac{|[a_\sigma, z_\sigma]|}{2|C_\sigma|} \geq \frac16$, which implies that $2\hat{\beta}(C_\sigma) \leq 12 \hat{\beta}(C_\sigma)^2$. Then 
\[ 
\sqrt{|[a_\sigma, z_\sigma]|^2+(2\hat\beta(C_\sigma)|C_\sigma|)^2} \leq  |[a_\sigma, z_\sigma]|  + 2\hat\beta(C_\sigma)|C_\sigma| \leq  |[a_\sigma, z_\sigma]|+ 12\hat\beta^2(C_\sigma)|C_\sigma|,
\]
because $\sqrt{a^2+b^2} \leq a + b$, if $a,b \geq 0$.

The same is true for $C_{\sigma*1}$ using the line segment $[z_\sigma, b_\sigma]$. Putting it together we obtain
\[
|C_{\sigma*0}|+|C_{\sigma*1}|\le |C_\sigma|+ 2(6^2K_1 + 12)\hat\beta^2(C_\sigma)|C_\sigma|,
\]
because $|[a_\sigma,z_\sigma]| + |[z_\sigma,b_\sigma]|=|[a_\sigma,b_\sigma]| \leq |C_\sigma|$.

  \item  Next suppose that $C_{\sigma*1}$, $C_{\sigma*2}$, and $B_\sigma$ were constructed via Case 2.

    If $\hat\beta(C_\sigma)\ge \frac{1}{30}$, then 
    \[|C_{\sigma*0}|+|C_{\sigma*1}|+\frac{1}{2}|B_\sigma|\le \frac{5}{2}|C_\sigma|\le |C_\sigma|+1350\hat\beta^2(C_\sigma)|C_\sigma|,\]
    where $1350=3/2*(30^2)$.

    On the other hand, if $\hat\beta(C_\sigma)<\frac{1}{30}$, because $\sqrt{a^2+b^2} \leq a + b$, if $a,b \geq 0$, we have
    \[
    |C_{\sigma*0}| \leq |I_0| + 2 \hat{\beta}(C_\sigma)|C_\sigma|
    \]
    the analogous for $C_{\sigma*0}$, and
    \[
    |B_\sigma| \leq |C_\sigma| - |I_0| - |I_1| + 2 \hat{\beta}(C_{\sigma})|C_\sigma|.
    \]
    Then
    \begin{align*}
        |C_{\sigma*0}|+|C_{\sigma*1}|+\frac{1}{2}|B_\sigma| & \leq |I_0| + |I_1| + 4 \hat{\beta}(C_\sigma)|C_\sigma| + \frac12 \left(|C_\sigma| - |I_0| - |I_1| + 2 \hat{\beta}(C_{\sigma})|C_\sigma|\right) \\
        & = \frac12 |C_\sigma| + \frac12 |I_0| + \frac12 |I_1| + 5 \hat{\beta}(C_{\sigma})|C_\sigma| \\
        & \leq \frac12 |C_\sigma| + \frac16 |C_\sigma| + \frac16 |C_\sigma| + \frac16 |C_\sigma| \\
        & = |C_\sigma|.
    \end{align*}
    \end{enumerate}
    Choosing $K = \max \{2(36^2K_1 + 12), 1350\}$ the proof is complete.
\end{proof}

\begin{lem}
\label{lem: bounding boundary of convex hull with diameter}
    For a convex hull $C_\sigma$
    \[
    \mathcal{H}^1(\partial C_{\sigma }\cup \hat{L}_{\sigma}) \le 7\,|C_{\sigma}|.
    \]
\end{lem}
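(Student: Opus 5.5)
Since $\mathcal{H}^1$ is subadditive, it suffices to bound the two pieces separately and show
\[
\mathcal{H}^1(\partial C_\sigma) + \mathcal{H}^1(\hat L_\sigma)\le 7|C_\sigma|.
\]
The diameter segment is easy. By construction $\hat L_\sigma=[a_\sigma,b_\sigma]$ joins two points of $C_\sigma$, so
\[
\mathcal{H}^1(\hat L_\sigma)=|a_\sigma-b_\sigma|\le |C_\sigma|.
\]
It therefore remains to prove $\mathcal{H}^1(\partial C_\sigma)\le 6|C_\sigma|$. In fact we will get the sharper bound $\pi|C_\sigma|$, which is more than enough.

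For the boundary, use that $C_\sigma$ is a compact convex set. It is the convex hull of a compact subset of $E$; we may take its closure, since the boundary and the diameter are unchanged. There are two routes.

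\emph{Route 1 (classical).} A convex set of diameter $d$ lies in a ball of radius $d$ about any of its points. The perimeter of a convex body is monotone under inclusion of convex bodies, because nearest-point projection onto a convex set is $1$-Lipschitz and maps the outer boundary onto the inner one. This gives $\mathcal{H}^1(\partial C_\sigma)\le 2\pi|C_\sigma|$, which is too weak for the constant $7$. So instead we use Cauchy's formula,
\[
\mathcal{H}^1(\partial C)=\int_0^{\pi}w_C(\theta)\,d\theta,
\]
where the width satisfies $w_C(\theta)\le |C|$. This gives $\mathcal{H}^1(\partial C_\sigma)\le \pi|C_\sigma|$.

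\emph{Route 2 (elementary, using Lemma \ref{lem: geometric properties of conves hulls containment}).} By (C4), $C_\sigma$ sits inside a rectangle $R$ of width $|C_\sigma|$ and height $2\hat\beta(C_\sigma)|C_\sigma|\le 2|C_\sigma|$. Monotonicity of perimeter for nested convex sets then gives
\[
\mathcal{H}^1(\partial C_\sigma)\le \mathcal{H}^1(\partial R)=2|C_\sigma|+4\hat\beta(C_\sigma)|C_\sigma|.
\]
Since every point of $C_\sigma$ lies within $|C_\sigma|$ of $\hat L_\sigma$, we have $\hat\beta(C_\sigma)\le 1$, so this bound is at most $6|C_\sigma|$.

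Either route, combined with the bound on $\hat L_\sigma$, yields the total $7|C_\sigma|$. I would present Route 2, since it reuses (C4) and explains the constant $7=1+6$.

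The main obstacle is justifying monotonicity of perimeter under inclusion of convex sets. I would prove it by the projection argument. Let $P$ be the nearest-point projection onto $C_\sigma$; it is $1$-Lipschitz and maps $\partial R$ onto $\partial C_\sigma$. Surjectivity holds because for each boundary point $p$ of $C_\sigma$ we can take an outward normal ray from $p$; this ray meets $\partial R$, and $P$ maps that intersection point back to $p$. Since $1$-Lipschitz maps do not increase $\mathcal{H}^1$, we get $\mathcal{H}^1(\partial C_\sigma)\le\mathcal{H}^1(\partial R)$.
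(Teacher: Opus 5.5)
Your argument is correct, and your Route 2 is essentially the paper's proof. The paper also bounds $\mathcal{H}^1(\hat L_\sigma)\le |C_\sigma|$ and encloses $C_\sigma$ in the $|C_\sigma|\times 2|C_\sigma|$ rectangle formed by the two squares of side $|C_\sigma|$ sharing the side $\hat L_\sigma$, whose perimeter is $6|C_\sigma|$. That is your (C4) rectangle with $\hat\beta(C_\sigma)\le 1$, but taken directly, so it does not depend on how $\hat\beta$ is normalized. Your nearest-point projection argument supplies the monotonicity of perimeter under inclusion of convex sets, which the paper uses without comment, and your Cauchy-formula route is a valid sharper alternative giving $(1+\pi)|C_\sigma|$.
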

\begin{proof}
First, 
\[\mathcal{H}^1(\partial C_{\sigma}\cup \hat L_{\sigma}) \le \mathcal{H}^1(\partial C_{\sigma}) + |C_{\sigma}|.
\]

Since $C_{\sigma}$ is contained in two squares of side length $|C_{\sigma}|$ which share a side equal to $\hat L_{\sigma}$, we have $\mathcal{H}^1(\partial C_{\sigma}) \leq 6\, |C_{\sigma}|$.
\end{proof}

Next, we follow \cite{BP} to compare the quantities $\hat\beta(C)$ to the quantity $\beta(Q)$ for all convex hulls $C$ that are associated to a cube $Q$.  While the proof of the lemma below follows exactly as in \cite{BP}, we note the collection of convex hulls $\mathscr{C}(Q)$ here may be strictly smaller than the collection of convex hulls summed over in \cite{BP} due to the fact that we associate each convex hull to a single dyadic cube.  
\begin{lem}[{\cite[Lemma 10.5.4]{BP}}]
\label{lem:BP1054}
    \[\sum_{C\in\mathscr{C} (Q)}\hat\beta^2(C)|C|\leq (288M)^2\left(\beta_E(3Q)\right)^2|Q|,\]
    where $M$ is as in Lemma \ref{lem: diameter_decrease}.
\end{lem}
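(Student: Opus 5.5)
The approach is to show that each summand is controlled termwise by $\beta_E(3Q)^2|Q|$, and then to count the summands using Corollary \ref{cor: convex hulls per point}.

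\emph{Step 1: each hull lies in $3Q$.} Fix $Q$ and $C\in\mathscr{C}(Q)$. Since $C\cap Q\neq\varnothing$ and $|C|\le \side(Q)$, we get $C\subset 3Q$. Every extreme point of $C$ lies in $E$, because $C$ is the convex hull of a subset of $E$. Hence these extreme points lie in $E\cap 3Q$.

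\emph{Step 2: compare $\hat\beta(C)$ with $\beta_E(3Q)$.} Let $\ell$ be a line nearly attaining $r_E(3Q)=\beta_E(3Q)|3Q|$. Then all of $E\cap 3Q$, and so all of $C$ by convexity, lies in the strip of half-width $w:=\beta_E(3Q)|3Q|$ around $\ell$. We want to bound $\hat\beta(C)|C|$ by a constant times $w$. This is where the geometry is needed, since the diameter $\hat L$ of $C$ may be tilted relative to $\ell$.

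There are two cases. If $w\ge |C|/100$, then the trivial bound $\hat\beta(C)\le 1$ already gives $\hat\beta(C)|C|\le 100w$. If instead $w$ is small compared to $|C|$, then $C$ lies in a thin strip. In that case the angle between $\hat L$ and $\ell$ is $\lesssim w/|C|$, and every point of $C$ lies within $\lesssim w$ of $\hat L$. This holds because both endpoints of $\hat L$ lie within $w$ of $\ell$ and are $|C|$ apart, so the distance to $\hat L$ is at most twice the strip half-width plus a tilt error of order $w$.

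Either way, $\hat\beta(C)|C|\le c\,w$ for an absolute constant $c$ (tracking constants should give something like $c\le 288$ after accounting for $|3Q|=3\sqrt2\,\side(Q)$). Since $|C|>\side(Q)/2\gtrsim |Q|$, we obtain
\[
\hat\beta(C)^2|C|=\frac{(\hat\beta(C)|C|)^2}{|C|}\le \frac{c^2w^2}{|C|}\lesssim \beta_E(3Q)^2|Q|.
\]

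\emph{Step 3: bound the number of hulls in $\mathscr{C}(Q)$.} By (C2), hulls of the same generation are disjoint. All hulls in $\mathscr{C}(Q)$ have diameter in $(\side(Q)/2,\side(Q)]$, so by Lemma \ref{lem: diameter_decrease} they span at most about $M$ consecutive generations along any chain. Within a single generation, the hulls are disjoint convex sets inside $3Q$, each containing a triangle of area $\gtrsim\hat\beta(C)|C|^2$. However, area alone does not bound their number, since $\hat\beta(C)$ may be tiny.

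To count them, I would instead use the fact that these hulls are disjoint sets of diameter comparable to $\side(Q)$, lying in the width-$2w$ strip inside $3Q$. Projecting onto $\ell$, same-generation hulls have nearly disjoint projections when $w\ll|C|$; each projection has length $\gtrsim|C|$ and all lie in an interval of length $\lesssim|Q|$. So there are $O(1)$ hulls per generation, and $O(M)$ in total, which matches the factor $M$ in the statement. If $w$ is comparable to $|C|$, packing disjoint diameter-$\gtrsim\side(Q)$ sets in $3Q$ is not controlled by area. There I would count points instead: pick $x\in E\cap C\cap Q$ and use Corollary \ref{cor: convex hulls per point}, so that each $x$ lies in at most $M$ hulls, combined with the nesting (hulls of different generations are nested or disjoint). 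Each maximal chain then has at most $M$ members, and there are $O(1)$ chain roots by the projection argument.

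Summing the termwise bound from Step 2 over at most $O(M)$ hulls gives $\lesssim M\beta_E(3Q)^2|Q|\le (288M)^2\beta_E(3Q)^2|Q|$.

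The main obstacle is the counting in Step 3, specifically bounding the number of same-generation hulls associated to $Q$ when they are not thin. I would first try the projection argument: disjoint convex hulls coming from the Bishop--Peres splitting have projections onto the diameter direction of a common ancestor that overlap only boundedly.
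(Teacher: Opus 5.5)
Steps 1 and 2 are fine. In fact, for $[a,b]=\hat L$ and $y\in C$, the triangle $aby$ sits in a $|C|\times 2r_E(3Q)$ rectangle, so $\hat\beta(C)|C|\le 2r_E(3Q)$. The gap is Step 3. Your argument needs $\#\mathscr{C}(Q)\lesssim M$, and neither route you sketch establishes it.

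The claim that same-generation hulls in a thin strip have nearly disjoint projections onto $\ell$ is false, even for these hulls. Take $Q=[0,1)^2$ and let $E\cap 3Q$ be the two segments $[0.1,0.9]\times\{\tfrac12\pm\delta\}$. Add, far outside $3Q$, the vertical segment $\{-100\}\times[\tfrac12-100,\tfrac12+100]$. The root's diameter is that vertical segment, so you may cut at height $\tfrac12$ in case (P1). On each side, a further (P1) cut through the vertical piece followed by a (P2) split isolates each horizontal segment as a hull of generation $3$. Both hulls lie in $\mathscr{C}(Q)$, they are disjoint, and they have identical projections onto $\ell$, while $r_E(3Q)=\delta$.

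Your fallback for fat hulls is circular. The number of chain roots is again the number of pairwise disjoint hulls of diameter comparable to $\side(Q)$ in $3Q$, and the projection argument was only claimed for thin strips. It is not even clear whether $\#\mathscr{C}(Q)=O(M)$ holds, and the proof should not depend on it.

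The missing idea is that you never need to count. You noted that each $C$ contains a triangle of area $\tfrac12\hat\beta(C)|C|^2$, then discarded area because it does not bound the number of hulls. It does bound exactly the right quantity, namely the weighted sum $\sum_C\hat\beta(C)$. Hulls with tiny $\hat\beta(C)$, like the stacked thin ones above, are harmless because they contribute proportionally little.

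Concretely, use three facts:
\begin{itemize}
\item Every $C\in\mathscr{C}(Q)$ lies in $W$, the intersection of $3Q$ with the strip of half-width $r_E(3Q)$. So $\area(W)\le 2r_E(3Q)|3Q|=18\beta_E(3Q)|Q|^2$.
\item Almost every point lies in at most $M$ of these hulls: hulls containing a point are nested by (C2), and Lemma \ref{lem: diameter_decrease} caps the length of such a chain.
\item $|C|>\side(Q)/2$ for each $C\in\mathscr{C}(Q)$.
\end{itemize}
Hence
\[
\frac{|Q|^2}{16}\sum_{C\in\mathscr{C}(Q)}\hat\beta(C)\le\sum_{C\in\mathscr{C}(Q)}\area(C)\le M\area(W)\le 18M\beta_E(3Q)|Q|^2,
\]
that is, $\sum_C\hat\beta(C)\le 288M\beta_E(3Q)$. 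This is the paper's argument. It then finishes with $\sum\hat\beta^2\le(\sum\hat\beta)^2$ and $|C|\le|Q|$.

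Alternatively, you can keep your Step 2 and write $\sum_C\hat\beta(C)^2|C|\le\max_C\bigl(\hat\beta(C)|C|\bigr)\sum_C\hat\beta(C)\lesssim M\beta_E(3Q)^2|Q|$. This gives a bound that is even linear in $M$.
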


\begin{proof}
Using the definition of $\beta_E(3Q)$, fix some minimizing line $\ell_{3Q}$ so that \[\dist(x,\ell_{3Q})\le \beta_E(3Q)|3Q|\qquad\text{for all } x\in E,\]
and then set $W:=\ell_{3Q}\times [-r_E(3Q), r_E(3Q)]\cap 3Q$. This is the strip shown on the left in Figure \ref{figure: different betas}. Let $C\in\mathscr{C}(Q)$.  Recall that by definition of $\mathscr{C}(Q)$, $C\cap Q\ne\varnothing$ and $\side(Q)/2\le |C|\le \side (Q)$ which implies that $C\subset 3Q$.   More precisely, $W$ is convex, $E\cap 3Q\subset W$, and $C$ is a convex hull of a subset of points in $E\cap 3Q$, $C\subset W$ which means that 
\[\area(C)\le \area(W)\le 2r_E(3Q)|3Q|=18\beta_E(3Q)|Q|^2.\] 
On the other hand, by Corollary \ref{cor: geometric properties of convex hulls area
} and $|C|\ge \side(Q)/2$,
\[\area(C)\ge \frac{1}{2}\hat\beta(C)|C|^2\ge \frac{1}{16}\hat\beta(C)|Q|^2.\]
By Corollary \ref{cor: convex hulls per point} point $x\in W$  is in no more $M$ convex hulls.  Thus
\begin{align*}
    \sum_{C \in \mathscr{C}(Q)} \hat \beta(C)|Q|^2 & \le 16 \sum_{C \in \mathscr{C}(Q)} \area(C) \leq 16 M \area(3Q\cap W) \leq 288 M\beta_E(3Q)|Q|^2.
\end{align*}
By dividing the string of inequalities by $|Q|^2$ and applying Cauchy-Schwartz Inequality, we obtain 
\[
\sum_{C \in \mathscr{C}(Q)} \hat \beta(C)^2 \leq \left(\sum_{C \in \mathscr{C}(Q)} \hat \beta(C)\right)^2 \leq (288M)^2 \beta_E(3Q)^2.
\]
Finally, since $C\in\mathscr{C}(Q)$ implies $|C|\le \side(Q)\le |Q|$,
\[
\sum_{C \in \mathscr{C}(Q)} \hat \beta(C)^2 |C|\leq\sum_{C \in \mathscr{C}(Q)} \hat \beta(C)^2 |Q|\leq (288M)^2 \beta_E(3Q)^2 |Q|. \qedhere
\]
\end{proof}

\begin{cor}
    If $Q$ is a bad cube, then 
    \[\sum_{C\in\mathscr{C}(Q)}\hat\beta^2(C)|C|\le (288M)^2(\beta_E(3Q)-r/|3Q|)^2|Q|.\]
\end{cor}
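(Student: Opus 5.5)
The plan is to improve the proof of Lemma~\ref{lem:BP1054} slightly, rather than quote it, so that the factor lost in passing from $\beta_E(3Q)$ to $\beta_E(3Q)-r/|3Q|$ is absorbed. First I would record what badness gives. If $Q$ is bad, then $r_E(3Q)\ge 2r$, that is, $\beta_E(3Q)|3Q|\ge 2r$. Hence $r/|3Q|\le \tfrac12\beta_E(3Q)$, and so
\[
\beta_E(3Q)^2\le 4\bigl(\beta_E(3Q)-r/|3Q|\bigr)^2 .
\]
Combining this directly with Lemma~\ref{lem:BP1054} only yields the constant $(576M)^2$. So it suffices to show the sharper bound
\[
\sum_{C\in\mathscr{C}(Q)}\hat\beta^2(C)|C|\le \tfrac14(288M)^2\beta_E(3Q)^2|Q|,
\]
and for this I will not use Cauchy--Schwarz in the last step of Lemma~\ref{lem:BP1054}.

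The first ingredient is the linear bound already established inside the proof of Lemma~\ref{lem:BP1054}, before squaring:
\[
\sum_{C\in\mathscr{C}(Q)}\hat\beta(C)\le 288M\,\beta_E(3Q).
\]
The second ingredient is a uniform bound on each term: $\hat\beta(C)|C|\le 2r_E(3Q)=6\beta_E(3Q)|Q|$ for every $C\in\mathscr{C}(Q)$. To prove it, take the triangle from Lemma~\ref{lem: geometric properties of conves hulls containment} (C3). Its base is a diameter segment of $C$ of length $|C|$, and it has height $\hat\beta(C)|C|$. Since the base is a longest side, the height onto it is the smallest altitude of the triangle. The smallest altitude of a triangle equals its minimal width. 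The triangle lies in $C$, and $C$ lies in the strip $W$ of width $2r_E(3Q)$, as shown in the proof of Lemma~\ref{lem:BP1054}. A convex set inside a strip has minimal width at most the width of the strip, which gives the claim.

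Finally I combine the two ingredients:
\[
\sum_{C\in\mathscr{C}(Q)}\hat\beta^2(C)|C|\le \Bigl(\max_{C\in\mathscr{C}(Q)}\hat\beta(C)|C|\Bigr)\sum_{C\in\mathscr{C}(Q)}\hat\beta(C)\le 6\beta_E(3Q)|Q|\cdot 288M\beta_E(3Q)=1728M\,\beta_E(3Q)^2|Q|.
\]
Since $M\ge1$, we have $1728M\le 20736M^2=\tfrac14(288M)^2$. Together with $\beta_E(3Q)^2\le 4(\beta_E(3Q)-r/|3Q|)^2$, this gives exactly the stated bound $(288M)^2(\beta_E(3Q)-r/|3Q|)^2|Q|$.

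The step I expect to need the most care is the width comparison $\hat\beta(C)|C|\le 2r_E(3Q)$. One must check that the diameter segment chosen in $\hat\beta$ really is the longest side of the (C3) triangle, so that its altitude is the minimal one. One must also check that $C\subset W$ holds, which uses $C\subset 3Q$ and $E\cap 3Q\subset W$. The remaining steps are arithmetic.
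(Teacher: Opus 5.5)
Your proof is correct, and it differs from the paper's at exactly the point that matters. The paper combines the badness inequality $\beta_E(3Q)<2(\beta_E(3Q)-r/|3Q|)$ directly with Lemma~\ref{lem:BP1054}. Squaring that inequality costs a factor $4$, so this route only gives the constant $4(288M)^2=(576M)^2$. Indeed, the paper's last step $(288M)^2\beta_E(3Q)^2|Q|\le(288M)^2(\beta_E(3Q)-r/|3Q|)^2|Q|$ goes the wrong way for $r>0$.

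You spotted this loss and recovered it in a different way. You replace the crude bound $\sum\hat\beta(C)^2\le(\sum\hat\beta(C))^2$ by $\max_C\hat\beta(C)|C|\cdot\sum_C\hat\beta(C)$. This needs the per-hull estimate $\hat\beta(C)|C|\le 2r_E(3Q)$. Your width argument for it is sound: the base of the (C3) triangle has length $|C|$, so it is automatically a longest side. And $C\subset W$ is exactly what the proof of Lemma~\ref{lem:BP1054} already uses.

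What this buys you is a bound $1728M\,\beta_E(3Q)^2|Q|$ that is linear in $M$, with ample room to absorb the factor $4$. So you prove the statement with the constant as written. The paper's argument, once corrected, gives it only with $(576M)^2$. That weaker constant would still suffice for the $\lesssim$ in the main theorem.

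If you prefer to stay entirely within the paper's tools, the per-hull bound also follows from areas. Since $C$ has diameter $|C|$ and lies in a strip of width $2r_E(3Q)$, we get $\tfrac12\hat\beta(C)|C|^2\le\area(C)\le 2r_E(3Q)|C|$. Hence $\hat\beta(C)|C|\le 4r_E(3Q)$, and the final constant becomes $3456M\le\tfrac14(288M)^2$.
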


\begin{proof}
    Recall $r_{3Q} = \beta_E(3Q)|3Q|$.
  Because $Q$ is a bad cube, $r_E(3Q)>2r$; that is, $r_E(3Q)/2 > r$. It follows that
    \[r_E(3Q) - r > \frac{r_E(3Q)}{2}.\]
Dividing both sides by $|3Q|$, we may write the inequality in terms of $\beta$-numbers as \[\beta_E(3Q) - \frac{r}{|3Q|} > \frac{\beta_E(3Q)}{2},\]
and a rearrangement yields
\[\beta_E(3Q)  < 2 \left(  \beta_E(3Q) - \frac{r}{|3Q|} \right).\]
Finally, we invoke Lemma \ref{lem:BP1054} to conclude that 
\[
\sum_{C \in \mathscr{C}(Q)} \hat \beta(C)^2 |C|\leq (288M)^2 \beta_E(3Q)^2 |Q|\le (288M)^2\left(\beta_E(3Q)-\frac{r}{|3Q|}\right)^2|Q|. \qedhere
\]
\end{proof}

     \begin{theorem}
     Given a set $E$ in $\R^2$,
        \[ 
        \Lambda(E,R) \lesssim |E|  + \sum_{Q \in \mathcal{D}} \max\{\beta_E(Q) - r/|Q|, 0\}^2|Q|.
        \]
     \end{theorem}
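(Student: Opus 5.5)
Since $\Gamma^*$ is a candidate curve for the MDP (by the preceding Proposition), we have $\Lambda(E,r)\le \mathcal{H}^1(\Gamma^*)$, so it suffices to bound $\mathcal{H}^1(\Gamma^*)$. By construction, $\mathcal{H}^1(\Gamma^*)$ is at most the total length of the bridges $B_\tau$ over bad hulls $C_\tau\in\bigcup_k\mathscr{B}_k$, plus $\sum \mathcal{H}^1(\partial C_\tau\cup L_\tau)$ over good hulls $C_\tau\in\bigcup_k\mathscr{G}_k$. Each $L_\tau$ is a chord of $C_\tau$, so $|L_\tau|\le|C_\tau|$. Arguing as in Lemma \ref{lem: bounding boundary of convex hull with diameter}, each good term is then $\le 7|C_\tau|$. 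Hence
\[
\mathcal{H}^1(\Gamma^*)\le 7\Big(\sum_{\text{good leaves } C}|C| + \sum_{\text{bad } C_\tau}|B_\tau|\Big),
\]
and it suffices to bound the right-hand side by $|E|$ plus the truncated square sum.

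To do this, I would run the Bishop--Peres telescoping argument along the tree of bad hulls. For a bad hull $C_\sigma$, Lemma \ref{lem: bounding new diameters with old diamters} gives
\[
|C_{\sigma*0}|+\tfrac12|B_\sigma|+|C_{\sigma*1}|\le |C_\sigma|+K\hat\beta^2(C_\sigma)|C_\sigma|.
\]
That lemma is stated when both children are bad, but its proof uses nothing about goodness of the children, so it applies to every bad parent. Define the potential $S_j=\sum_{C\in\mathscr{G}_{\le j}\cup\mathscr{B}_j}|C|+\frac12\sum_{k<j}\sum_{C_\tau\in\mathscr{B}_k}|B_\tau|$. Summing the inequality over $C_\sigma\in\mathscr{B}_j$ gives $S_{j+1}\le S_j+K\sum_{C\in\mathscr{B}_j}\hat\beta^2(C)|C|$. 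Iterating from $S_0=|C_\varnothing|=|E|$ to $S_N$ yields
\[
\sum_{\text{good } C}|C|+\tfrac12\sum_{\text{bad }C_\tau}|B_\tau|\le |E|+K\sum_{C\text{ bad}}\hat\beta^2(C)|C|.
\]
If $C_\varnothing$ itself is good, the bound $\mathcal{H}^1(\Gamma^*)\le 7|E|$ is immediate.

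Next I would convert the error term into the cube sum. Every $C$ has some dyadic cube $Q$ with $C\in\mathscr{C}(Q)$: choose $Q$ of side $2^{n}$ with $2^{n-1}<|C|\le 2^n$ meeting $C$. A bad $C$ has all such $Q$ bad. So
\[
\sum_{C\text{ bad}}\hat\beta^2(C)|C|\le\sum_{Q\in\mathcal{D}_{\rm bad}}\sum_{C\in\mathscr{C}(Q)}\hat\beta^2(C)|C|.
\]
By the Corollary following Lemma \ref{lem:BP1054}, this is bounded by
\[
(288M)^2\sum_{Q\text{ bad}}\big(\beta_E(3Q)-r/|3Q|\big)^2|Q| \le (288M)^2\sum_{Q\in\mathcal{D}}\max\{\beta_E(3Q)-r/|3Q|,0\}^2|Q|.
\]

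The main obstacle is the final step: passing from $3Q$ back to $Q$ in the statement's sum $\sum_Q\max\{\beta_E(Q)-r/|Q|,0\}^2|Q|$. I would cover $3Q$ by a bounded number of dyadic cubes $Q'$ of side $4\side(Q)$, or else by a single dyadic cube of comparable size containing $3Q$ when one exists. I would then use monotonicity $r_E(3Q)\le r_E(Q')$, with $|Q'|\simeq|3Q|$. Each $Q'$ arises from at most a bounded number of $Q$, which gives
\[
\big(r_E(3Q)-r\big)_+^2/|Q|\lesssim \big(r_E(Q')-r\big)_+^2/|Q'|.
\]
The difficulty is that $3Q$ is not contained in a single dyadic cube in general. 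If this fails, I would switch to shifted dyadic grids (the standard one-third trick) and note that the statement should be read with $3Q$, as in Theorem \ref{ATST}, or up to this standard grid comparison. Combining all the steps gives
\[
\Lambda(E,r)\le\mathcal{H}^1(\Gamma^*)\lesssim |E|+\sum_Q\max\{\beta_E(Q)-r/|Q|,0\}^2|Q|.
\]
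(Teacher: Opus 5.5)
\textbf{The final step, passing from $3Q$ to $Q$, is a genuine gap, and it cannot be closed.}

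Up to the inequality
\[
\Lambda(E,r)\lesssim |E|+\sum_{Q\in\mathcal{D}}\max\{\beta_E(3Q)-r/|3Q|,0\}^2|Q|
\]
your argument is the paper's. You telescope the Bishop--Peres estimate over bad parents; your potential $S_j$ is a cleaner bookkeeping of the same sum. You then use the perimeter bound for good hulls, assign each bad hull to a bad cube, and apply the corollary of Lemma~\ref{lem:BP1054}. The paper then simply writes $\beta_E(Q)-r/|Q|$ in its last two lines.

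You are right to single out that passage, but your repair does not work. Monotonicity $r_E(3Q)\le r_E(Q')$ needs $3Q\subset Q'$ for a single dyadic $Q'$, and such a cube need not exist. No dyadic cube meets both sides of a coordinate axis, so nothing in $\mathcal{D}$ contains $3Q$ when $Q$ has an edge on an axis. Covering $3Q$ by several dyadic cubes gives no control, because $E$ can be flat inside each of them along different lines. Shifted grids do give a true bound, but the right-hand side is then a sum over several grids, which is not the stated quantity.

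In fact no argument can close the gap, because the inequality with $\beta_E(Q)$ is false. Fix $\delta=2^{-p}$ small and $K\ge 1$. For a dyadic interval $J=[a,a+s)$ with midpoint $c$, set $Y(J,0)=\{c\}$ and $Y(J,k)=Y([c-\delta s,c),k-1)\cup Y([c,c+\delta s),k-1)$. Let $Y=Y([0,1),K)$, so $\#Y=N=2^K$, and let $E=[0,1]\times Y$.

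Write $w(I)$ for the width of $Y\cap I$. Every dyadic interval then has $w(I)<2\delta|I|$. The only dyadic $I$ with $\#(Y\cap I)\ge 2$ are:
\begin{itemize}
\item the $N-1$ recursion intervals;
\item the intervals $[c,c+2^{-i})$ and $[c-2^{-i},c)$ with $\delta s<2^{-i}\le s/2$, which sit between a recursion interval and its children;
\item the ancestors of $[0,1)$.
\end{itemize}
Hence $\sum_I (w(I)/|I|)^2\lesssim N\delta^2$. A dyadic square $I'\times I$ meets $E$ either in horizontal segments at heights in $Y\cap I$ or in points of the line $x=1$. So $\beta_E(Q)|Q|\le w(I)/2$, and summing over the squares in each row gives $\sum_Q\beta_E(Q)^2|Q|\lesssim N\delta^2$.

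On the other hand, suppose $4r$ is smaller than the minimal gap of $Y$. Every admissible $\Gamma$ contains a point within $r$ of each $(4kr,y)$ with $y\in Y$ and $0\le k\le 1/(4r)$. This gives at least $N/(4r)$ points with mutual distances at least $2r$. The open $r$-balls around them are disjoint, and the connected set $\Gamma$ crosses each of them, so $\mathcal{H}^1(\Gamma)\ge N/4$.

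Thus $\Lambda(E,r)\ge N/4$, while the stated right-hand side is $\lesssim 1+N\delta^2$. No universal constant survives $\delta\to 0$, $N\to\infty$.

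So what your proof, and the paper's, actually establishes is the version with $\beta_E(3Q)-r/|3Q|$, consistent with Theorem~\ref{ATST}. Your remark that the statement should be read with $3Q$ is the correct resolution. You should state that as the conclusion and drop the attempted $3Q\to Q$ comparison.
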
  
\begin{proof}
The estimates in Lemma \ref{lem: bounding new diameters with old diamters} do not depend on whether a convex hull is good or bad. This implies that
\[\sum_{C_{\sigma*i}\in \mathscr{G}_{j+1}}|C_{\sigma*i}|+\sum_{C_{\sigma*i}\in\mathscr{B}_{j+1}}|C_{\sigma*i}|+\frac{1}{2}\sum_{|\sigma|=j+1}|B_\sigma|\le \sum_{C_\sigma\in\mathscr{B}_{j}}|C_\sigma|+\sum_{C_\sigma\in\mathscr{B}_j}\beta^2(C_\sigma)|C_\sigma|.\]
Here we are using the fact that we only add convex hulls to the set $\mathscr{G}_{j+1}$ and $\mathscr{B}_{j+1}$ if their parent is bad.
We can rearrange this as
\[\sum_{C_{\sigma*i}\in \mathscr{G}_{j+1}}|C_{\sigma*i}|+\sum_{C_{\sigma*i}\in\mathscr{B}_{j+1}}|C_{\sigma*i}|+\frac{1}{2}\sum_{|\sigma|=j+1}|B_\sigma|- \sum_{C_\sigma\in\mathscr{B}_{j}}|C_\sigma|\le\sum_{C_\sigma\in\mathscr{B}_j}\beta^2(C_\sigma)|C_\sigma|.\]
Adding up over all $N$ generations yields
\[\sum_{j=1}^{N}\sum_{C_{\sigma*i}\in \mathscr{G}_{j+1}}\! \! |C_{\sigma*i}|+\sum_{j=1}^N\sum_{C_{\sigma*i}\in\mathscr{B}_{j+1}}\! \!|C_{\sigma*i}|+\sum_{j=1}^N\frac{1}{2}\sum_{|\sigma|=j+1}|B_\sigma|- \sum_{j=1}^N\sum_{C_{\sigma\in\mathscr{B}_{j}}}\! \!  |C_\sigma|\le\sum_{j=1}^N\sum_{C_\sigma\in\mathscr{B}_j}\beta^2(C_\sigma)|C_\sigma|.\]
Next, we telescope the bad convex hull terms, which reduces the estimate to 
\[\sum_{j=1}^{N}\sum_{C_{\sigma*i}\in \mathscr{G}_{j+1}}|C_{\sigma*i}|+\sum_{C_{\sigma*i}\in\mathscr{B}_{N+1}}|C_{\sigma*i}|+\sum_{j=1}^N\frac{1}{2}\sum_{|\sigma|=j+1}|B_\sigma|-|E|\le\sum_{j=1}^N\sum_{C_\sigma\in\mathscr{B}_j}\beta^2(C_\sigma)|C_\sigma|.\]
However, $\mathscr{B}_{N+1}=\varnothing$.  Therefore we get
\[\sum_{j=1}^{N}\sum_{C_{\sigma*i}\in \mathscr{G}_{j+1}}|C_{\sigma*i}|+\frac{1}{2}\sum_{j=1}^N\sum_{|\sigma|=j+1}|B_\sigma|\le|E|+\sum_{j=1}^N\sum_{C_\sigma\in\mathscr{B}_j}\beta^2(C_\sigma)|C_\sigma|.\]
Finally, using Lemma \ref{lem: bounding boundary of convex hull with diameter}
\begin{align*}
    \mathcal{H}^1(\Gamma^*)&=\sum_{j=1}^N\sum_{C_\sigma\in\mathscr{G}_k}(\mathcal{H}^1(\partial C_{\sigma}\cup (C_{\sigma}\cap L_{\sigma})))+\sum_{j=1}^N\sum_{|\sigma|=j}|B_\sigma|\\
    &\le 7\left(\sum_{j}^{N}\sum_{C_{\sigma}\in \mathscr{G}_{j}}|C_{\sigma}|+\frac{1}{2}\sum_{j=1}^N\sum_{|\sigma|=j+1}|B_\sigma|\right)\\
    &\le 7\left(|E|+\sum_{j=1}^N\sum_{C_\sigma\in\mathscr{B}_j}\beta^2(C_\sigma)|C_\sigma|\right)\\
    &= 7\left(|E|+\sum_{C_\sigma\in\cup_{j=1}^N\mathscr{B}_j}\beta^2(C_\sigma)|C_\sigma|\right)\\
        &\le 7\left(|E|+\sum_{Q\in\mathcal{D}_\text{bad}}\sum_{C_\sigma\in\mathscr{C}(Q)}\beta^2(C_\sigma)|C_\sigma|\right)\\
                &\le 14(288M)^2\left(|E|+\sum_{Q\in \mathcal{D}_\text{bad}}\sum_{C_\sigma\in\mathscr{C}(Q)}(\beta_E(Q)-r/|Q|)^2|Q|\right)\\
    & \le 14(288M)^2\left(|E|+\sum_{Q\in \mathcal{D}_\text{bad}} \max\{\beta_E(Q) - r/|Q|, 0\}^2|Q|\right).
\end{align*}
Note that because we are taking the maximum of $\beta_E(Q)-r/|Q|$ and $0$, we are only adding up over the bad cubes.
\end{proof}
By looking at the asymptotics for $r \to 0$, we recover the classical estimate of the Analyst's Traveling Salesperson Theorem \cite{J}.
\begin{cor}
    If $E$ is contained in a connected set of finite length, $\Gamma^*$, 
    \[
    |E| +  \sum_{\substack{Q \in \mathcal{D}}} \beta^2_E(3Q) |Q| \lesssim \mathcal{H}^1(\Gamma^*) \lesssim |E| +  \sum_{\substack{Q \in \mathcal{D}}} \beta^2_E(3Q) |Q|.
    \]
\end{cor}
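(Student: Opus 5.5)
The plan is to obtain the corollary as the $r\to 0$ limit of Theorem~\ref{thm: Main}, with Theorem~\ref{ATST} as a cross-check and as the source of monotone convergence. Here $\Gamma^*$ denotes a shortest compact connected set containing $E$; it exists by compactness, as in \cite[Theorem 2.15]{AKV} with $r=0$. Write
\[
S_r := \sum_{Q \in \mathcal{D}} \max\{\beta_E(Q) - r/|Q|, 0\}^2|Q|.
\]

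The first step is to control $S_r$ as $r \downarrow 0$. For each fixed $Q$ the summand increases to $\beta_E(Q)^2|Q|$, so monotone convergence gives $S_r \uparrow \sum_Q \beta_E(Q)^2|Q|$. Along the way we must reconcile $\beta_E(Q)$ with $\beta_E(3Q)$. The upper-bound argument naturally produces $\beta_E(3Q)$ (see Lemma~\ref{lem:BP1054} and the corollary following it), and $\beta_E(Q)|Q|\le \beta_E(3Q)|3Q|$ because $E\cap Q\subset E\cap 3Q$. Hence $\sum_Q\beta_E(Q)^2|Q|\le 9\sum_Q\beta_E(3Q)^2|Q|$, so the lower-bound side of Theorem~\ref{thm: Main} passes to a sum in $\beta_E(3Q)$ at the cost of a constant.

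The second step is to relate $\Lambda(E,r)$ to $\mathcal{H}^1(\Gamma^*)$ as $r\to0$. One direction is immediate: $\Gamma^*$ is admissible for every $r$, so $\Lambda(E,r)\le \mathcal{H}^1(\Gamma^*)$. The lower estimate of the corollary then follows from Theorem~\ref{thm: lower bound} after letting $r\to 0$ and using step one. More directly, it is simply the second half of Theorem~\ref{ATST} applied with $I=\Gamma^*$.

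For the upper estimate, take $r_n\to0$ and minimizers $\Gamma_n$ with $\mathcal{H}^1(\Gamma_n)=\Lambda(E,r_n)$. The upper bound of Theorem~\ref{thm: Main} gives
\[
\mathcal{H}^1(\Gamma_n)\lesssim |E|+S_{r_n}\le |E|+9\sum_Q\beta_E(3Q)^2|Q|,
\]
uniformly in $n$. By Blaschke selection and Go\l\k{a}b's theorem, a subsequence converges in Hausdorff distance to a compact connected $\Gamma_\infty$ with $\mathcal{H}^1(\Gamma_\infty)\le\liminf\mathcal{H}^1(\Gamma_n)$. Since $E\subset\mathcal{N}(\Gamma_n,r_n)$, every point of $E$ lies in $\Gamma_\infty$. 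Minimality of $\Gamma^*$ then gives $\mathcal{H}^1(\Gamma^*)\le\mathcal{H}^1(\Gamma_\infty)$, which yields the upper bound. (Alternatively, this is the first half of Theorem~\ref{ATST}.)

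The main obstacle is the limiting step. First, the sets $\Gamma_n$ must stay in a bounded region so that Blaschke selection applies; this holds because a minimizer can be projected onto $\conv(E)$ without increasing length. Second, the Hausdorff limit must contain all of $E$. Both points are standard but need care; the uniform length bound itself comes for free from Theorem~\ref{thm: Main}.
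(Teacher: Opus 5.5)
Your argument is correct, and it is the paper's intended route. The paper only says the corollary follows by letting $r\to0$ in Theorem~\ref{thm: Main}. You supply the limiting step it leaves implicit: the uniform length bound, Blaschke selection, Go\l\k{a}b's theorem, the inclusion $E\subset\Gamma_\infty$, and minimality of $\Gamma^*$. For the upper bound you also use $\beta_E(Q)\le 3\beta_E(3Q)$ in the right direction, to get $S_{r_n}\le 9\sum_Q\beta_E(3Q)^2|Q|$.

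One claim in step one is wrong and should be dropped. That comparison does \emph{not} let the lower bound of Theorem~\ref{thm: lower bound} pass to a sum in $\beta_E(3Q)$. Letting $r\to0$ there gives only $|E|+\sum_Q\beta_E(Q)^2|Q|\lesssim\mathcal{H}^1(\Gamma^*)$. To replace $\beta_E(Q)$ by $\beta_E(3Q)$ you would need $\sum_Q\beta_E(3Q)^2|Q|\lesssim|E|+\sum_Q\beta_E(Q)^2|Q|$. That is the reverse of the inequality you proved, and no trivial comparison gives it, since $3Q$ straddles dyadic boundaries.

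The lower bound is actually proved by your fallback: the second half of Theorem~\ref{ATST} applied with $I=\Gamma^*$. Alternatively, rerun the proof of Theorem~\ref{thm: lower bound} with $3Q$ in place of $Q$ before letting $r\to0$: the inequality $r_E(3Q)\le r_\Gamma(3Q)+r$ gives $\max\{\beta_E(3Q)-r/|3Q|,0\}\le\beta_\Gamma(3Q)$.
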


\begin{remark}
    If $r=0$, our constructive algorithm recovers exactly the algorithm in \cite{BP} to construct a curve for the Analyst's Traveling Salesperson Theorem \cite{J}. In such case all convex hulls would be good, so our construction would not stop after finitely many steps.

    For a fixed $r>0$, let $\Gamma^*_r$ be the curve constructed in Section \ref{subsect: curve}. If $\Gamma^*$ is the curve constructed in \cite[Section 10.5]{BP}, we have that $\dist_{H}(\Gamma^*, \Gamma^*_r)\le 2r$, where $\dist_H(\cdot, \cdot)$ denotes the Hausdorff distance. By letting $r \to 0$, the curves $\Gamma^*_r$ converge to $\Gamma^*$.
\end{remark}


\begin{thebibliography}{99}

\bibitem[ACMN]{ACMN}
E. G. Alvarado, L.Catalano, T. Merch\'an, L. Naples, \textit{Asymptotics of maximum distance minimizers}, arXiv preprint arXiv:2309.08055 (2023).

\bibitem[AKV]{AKV} E. G. Alvarado, B. Krishnamoorthy, K. R. Vixie, \textit{The Maximum Distance Problem and Minimum Spanning Trees}, Int. J. Anal. Appl., 19 (5) (2021), 633--659.

\bibitem[BP]{BP}
Bishop, Christopher J.; Peres, Yuval. Fractals in probability and analysis. Cambridge Studies in Advanced Mathematics, 162. {\it Cambridge University Press}, Cambridge, 2017. ix+402 pp. ISBN: 978-1-107-13411-9 MR3616046

\bibitem[BOS]{BOS} G. Buttazzo, E. Oudet, and E. Stepanov. 
\textit{Optimal transportation problems with free Dirichlet regions}. Variational Methods for Discontinuous Structures (2002), 41--65.

\bibitem[J]{J}
P. W. Jones, 
\textit{Rectifiable sets and traveling salesman problem}, Invent. Math.  102 (1990), 1--15.

\bibitem[O]{O}
K. Okikiolu, \textit{Characterization of subsets of rectifiable curves in $\mathbb{R}^n$},  J. London Math. Soc. (2) 46 (1992), 336-348.

\end{thebibliography}
\end{document}